\newcommand*{\sheafhom}{\mathcal{H}\kern -.5pt om}
\numberwithin{equation}{section} 
\numberwithin{figure}{section} 
\numberwithin{table}{section} 
\newtheorem{thm}{Theorem}[section]
\newtheorem{cor}[thm]{Corollary}
\newtheorem{prop}[thm]{Proposition}
\newtheorem{obs}[thm]{Observation}
\newtheorem{lem}[thm]{Lemma}
\theoremstyle{definition}
\newtheorem{defn}[thm]{Definition}
\theoremstyle{remark}
\newtheorem{rem}[thm]{Remark}
\DeclareMathOperator{\Sym}{Sym}
\DeclareMathOperator{\Rep}{Rep}
\newcommand{\horrule}[1]{\rule{\linewidth}{#1}} 
\title{	
	\normalfont \normalsize 
	\textsc{} \\ [25pt] 
	\horrule{0.5pt} \\[0.4cm] 
	\huge Gamma vectors as inverted Chebyshev expansions, type A to B transformations, and connections to algebraic structures

	\horrule{2pt} \\[0.5cm] 
}
\author{Soohyun Park} 
\date{\normalsize November 3, 2025} 
\begin{document}
	
	\maketitle

	\begin{abstract}
		\noindent Given a reciprocal/palindromic polynomial of even degree, we show that the gamma vector is essentially given by an inverted Chebyshev polynomial basis expansion. As an immediate consequence, we characterize real-rootedness of a linear combination of Chebyshev polynomials in terms of real-rootedness of that of the reciprocal polynomial built out of an inverted scaled tuple of the coefficients with one fixed and the rest divided by 2. It can be taken as a counterpart for arbitrary dimensions of a recent result of Bel-Afia--Meroni--Telen on hyperbolicity of Chebyshev curves with respect to the origin. In general, Chebyshev varieties serve as a counterpart of toric varieties in sparse polynomial root finding. Apart from this, the inverted Chebyshev expansion also yields connections between intrinsic properties of the gamma vector construction and the geometric combinatorics of simplicial complexes and posets. \\
		
		\noindent We find this by applying work of Hetyei on Tchebyshev subdivisions and Tchebyshev posets. In particular, we find that the gamma vector transformation is closely related to $f$-vectors of simplicial complexes resulting from successive edge subdivisions that transform the type A Coxeter complex to the type B Coxeter complex. Lifting to this to the level of $ce$-indices (a modification of $cd$-indices), we show that the gamma vector inverted Chebyshev polynomial expansion lifts to a sum of $ce$-indices of cross polytope triangulations which can be computed using descent statistics involving edge labelings of maximal chains. While there are many examples in the literature where gamma positivity involving descent statistics, it is interesting to note that we have this without initial structural assumptions on the input polynomial apart from being reciprocal/palindromic. Finally, we note that all of these can be repeated with Chebyshev polynomials of the second kind after taking derivatives. This gives connections to Hopf algebras and quasisymmetric functions along with Lefschetz-type maps induced by $\mathfrak{sl}_2(\mathbb{C})$-representations.
	\end{abstract}
	
	\section*{Introduction} 

	Our main object of study is the gamma vector, which occurs in many different contexts including permutation statistics, Euler characteristics of nonpositively curved (piecewise Euclidean) manifolds, and triangulations of (boundaries of) polytopes (see \cite{Athgam} for a survey). \\
	
	Given a polynomial $h(t)$ of degree $d$, the function $\gamma(t)$ is defined (Proposition 2.1.1 on p. 272 of \cite{Gal}) using the relation \[ h(t) = (1 + t)^d h \left( \frac{t}{(1 + t)^2} \right). \]
	
	We will focus on the case where $h(t)$ and of even degree (which is the case in most existing applications and how it was originally defined). Then, we can set the $\gamma_i$ to be the unique coefficients such that \[ h(t) = \sum_{i = 0}^{ \frac{d}{2}  }  \gamma_i x^i (1 + x)^{d - 2i}. \]
	
	Note that positivity of the gamma vector lies somewhere between unimodality and real-rootedness when $h(t)$ has nonnegative coefficients in addition to being reciprocal (having $h_k = h_{d - k}$) and unimodal (Observation 4.2 on p. 82 of \cite{Pet} and Remark 3.1.1 on p. 277 of \cite{Gal}) and was originally motivated by real-rootedness considerations. \\

	The perspective we take here on gamma vectors of reciprocal polynomials comes from combining this expansion often used in applications and an explicit formula  for the gamma vector we had considered earlier \cite{Pgam} by taking compositional inverses (with $C(u)$ being the generating function of the Catalan numbers and $\widetilde{C}(u) \coloneq C(u) - 1$). More specifically, we consdered a modification of the gamma vector for basis elements $x^k + x^{d - k}$ of reciprocal polynomials of degree $\le d$ for even $d$ where the products of such vectors with $h_k$ add up to the gamma vector of the initial polynomial $h(t)$ being studied (Proposition \ref{modgambasis}). Continuing with this train of thought leads to an expression for the gamma polynomial which is a sort of inverted Chebyshev basis expansion (Theorem \ref{gamchebinv}). As an immediate consequence, we find a characterization of real-rootedness of linear combinations of Chebyshev polynomials in terms of that of the reciprocal polynomial built out of an inverted scaled tuple of the coefficients with one fixed and the rest divided by 2 (Corollary \ref{cheblinreal}) since the gamma polynomial $\gamma(t)$ of a reciprocal polynomials $h(t)$ is real-rooted if and only if $h(t)$ is. This is a counterpart of a recent result of Bel-Afia--Meroni--Telen (Theorem 2.5 on p. 7 of \cite{BAMT}) on the hyperbolicity of Chebyshev curves with respect to the origin. In general, Chebyshev varieties serve as a counterpart of toric varieties in sparse polynomial root finding. \\
	
	As it turns out, these explicit expressions also yield connections to intrinsic geometric and combinatorial information on gamma polynomials pertaining to simplicial complexes and posets. Combining the decomposition with work of Hetyei on Tchebyshev subdivisions \cite{He2}, the inverted Chebyshev basis expansion implies that there is a close connection between the construction of the gamma polynomial out of a reciprocal polynomial and $f$-vectors of simplicial complexes resulting from successive edge subdivisions that transform the type A Coxeter complex to the type B Coxeter complex (Part 1 and Part 2 of Corollary \ref{gamchebdiv}). \\

	Along the way, we find that the gamma vector gives the class in $K_0(\Rep(\mathfrak{sl}(2, \mathbb{C})))$ of the $\mathfrak{sl}(2, \mathbb{C})$-representation with multiplicities of the irreducible representations determined by the (scaled) $h_i$ in reverse order (Part 3 of Corollary \ref{gamchebdiv}). This involves taking a closer look at linear transformations $\mathbb{R}[x] \longrightarrow \mathbb{R}[x]$ and specializations to $SL_2(\mathbb{C})$-representations. We note that it was previously known that the matrix taking a gamma vector to the first $\lfloor \frac{d}{2} \rfloor + 1$ indices of the $h$-vector is totally nonnegative by a Lindstr\"om--Gessel--Viennot argument (see p. 1377 of \cite{NPT} and Section \ref{addrec}). More specifically, we show that the latter gives rise to multiplication-compatible bijections with palindromic/reciprocal unimodal polynomials and Lefschetz-type maps via induced $\mathfrak{sl}_2(\mathbb{C})$-representations (applying work of Almkvist \cite{Alm1}) and study when it is possible to ``reverse-engineer'' an $F$-polynomial structure (Corollary \ref{gamchebdiv}). \\
	
	\color{black}
	Before giving the connection to (topological generalization of) permutation statistics, we give some context on combinatorial interpretations related to the gamma vector. In many examples giving combinatorial interpretations for the gamma vector, the input $h$-vector counts some descent statistic and the resulting gamma vector somehow involves descents and/or filters out double descents. For example, this includes examples ranging from the Eulerian polynomials originally considered by Foata--Sch\"utzenberger to Chow rings of matroids (recent work of Stump \cite{Stu} in general and a combination of Postnikov--Reiner--Williams \cite{PRW} and Ardila--Reiner--Williams \cite{ARW} for those associated to certain hyperplane arrangements constructed out of root systems), and $h$-vectors of nestohedra (Postnikov--Reiner--Williams \cite{PRW}). \\
	
	More specifically: Applying earlier work of Hetyei on Tchebyshev posets \cite{He1}, lifting the Chebyshev polynomials in the inverted basis decomposition to $ce$-indices (a modification of the $cd$-indices) of posets giving triangulations of cross polytopes also gives connections to descent statistics involving edge labelings of maximal chains on posets (Corollary \ref{gamtopdes}). As noted by Hetyei \cite{He1}, this is analogous to the use of Andr\'e permutations (permutations without double descents and an additional property) and their signed versions used by Purtill \cite{Pur} to study cd-indices of Boolean lattices and cubical lattices respectively (Part 1 of Remark \ref{permcrossconn}). It is interesting that we see this without any assumptions on the initial polynomial apart from the reciprocal polynomial assumption $h_k = h_{d - k}$. Apart from this, cross polytopes and repeated edge subdivisions relevant to settings where gamma vectors often studied in relation to flag and balanced simplicial complexes (some of which are listed in Part 2 of Remark \ref{permcrossconn}). \\
	
	Finally, we end by noting that all of these results have analogues for Chebyshev polynomials of the second kind after taking derivatives and point out that they connect the objects that we studied with Hopf algebras and quasisymmetric functions (Remark \ref{secondchebver}).

	\section*{Acknowledgments}

	 The idea to look at Chebyshev polynomials came to mind while listening to a talk about Chebyshev varieties \cite{BAMT} by Chiara Meroni at MEGA 2024 and thinking about gamma vector-like computations for bases of reciprocal polynomials. I'd like to thank the participants and organizers for an enjoyable conference.

	\section{Inverted Chebyshev expansions} \label{invchebexpsect}

	In this section, we show that gamma polynomials $\gamma(u)$ of palindromic/reciprocal polynomials $h(u)$ are closely related to inverted expansions as linear combinations of Chebyshev polynomials (Theorem \ref{gamchebinv}). We were led to this by  studying gamma vector-like invariants for the usual basis $x^k + x^{d - k}$ of the reciprocal/palindromic polynomials of degree $\le d$ and recursions they saisfy when $d$ is even (Proposition \ref{modgambasis}). This is the starting point for our analysis of combinatorial structures ``intrinsically'' related to the gamma vector in later sections. As an initial consequence, we obtain a counterpart of a recent result of Bel-Afia-Meroni-Telen \cite{BAMT} on hyperbolicity of Chebyshev curves for higher dimensions (Corollary \ref{cheblinreal}). Note that Chebyshev varieties share key properties with toric varieties and serve as a counterpart of them in sparse polynomial root finding (see \cite{BAMT}). \\

	Given an input polynomial $h(u)$, one of the formulas in our previous work \cite{Pgam} was of the form \[ \gamma(u) = (J \circ \widetilde{C})(u) \] with \[ J(u) = \frac{h(u)}{(u + 1)^d} \] and $\widetilde{C}(u) \coloneq C(u) - 1$, where $C(u)$ denotes the generating function of the Catalan numbers.  \\
	
	After making the substitution, we have that \[ \gamma(u) = \frac{h(\widetilde{C}(u))}{C(u)^d} \Longrightarrow h(\widetilde{C}(u)) = \gamma(u) C(u)^d. \] 
	
	Suppose that $d$ is even. Since \[ \frac{\widetilde{C}(u)}{(\widetilde{C}(u) + 1)^2} = u \Longrightarrow \frac{C(u) - 1}{C(u)^2} = u \Longrightarrow C(u)^d = \left( \frac{C(u) - 1}{u} \right)^{\frac{d}{2}}, \] this is equivalent to \[  h(\widetilde{C}(u)) = \gamma(u) \left( \frac{\widetilde{C}(u)}{u} \right)^{\frac{d}{2}} \Longrightarrow u^{\frac{d}{2}} h(\widetilde{C}(u)) = \gamma(u) \widetilde{C}(u)^{\frac{d}{2}}.  \]

	Assume that $h(t)$ is a reciprocal polynomial (i.e. $h_k = h_{d - k}$ when $\deg h = d$). We can consider from the perspective of the basis elements of the vector space of reciprocal polynomials used to construct a modified version of the gamma vector. We first list the usual definition of the gamma vector and later combine it with an explicit expression involving compositional inverses using the generating function of the Catalan numbers. \\
	
	\begin{defn} (see proof of Proposition 2.1.1 on p. 272 of \cite{Gal}) \\
		Given a reciprocal polynomial $h(t) = h_0 + h_1 t + \ldots + h_d t^d$ of even degree $d$, the gamma vector is given by the coefficients of the unique polynomial $\gamma(u)$ of degree $\le \frac{d}{2}$ such that \[ h(t) = (1 + t)^d \gamma \left( \frac{t}{(1 + t)^2} \right).   \] Alternatively, $(\gamma_0, \ldots, \gamma_{\frac{d}{2}})$ consists of the unique $\gamma_i$ such that \[ h(t) = \sum_{i = 0}^{ \frac{d}{2} } \gamma_i x^i (1 + x)^{d - 2i}. \]

	\end{defn}
	
	\color{black}
	
	More specifically, substituting in $x = \widetilde{C}(u)$ into $x^i (x + 1)^{d - 2i}$ yields $u^i C(u)^d$ since $\frac{\widetilde{C}(u)}{(\widetilde{C}(u) + 1)^2} = u$. When $h$ is a reciprocal polynomial, it is a linear combination of terms of the form $x^k + x^{d - k}$ for $0 \le k \le \frac{d}{2}$ and substituting in $x = \widetilde{C}(u)$ gives $\widetilde{C}(u)^k + \widetilde{C}(u)^{d - k}$. In this language, the $\gamma_i$ associated to $x^k + x^{d - k}$ yield the polynomial $\widetilde{\gamma}(u)$ (which depends on $k$) such that $\widetilde{C}(u)^k + \widetilde{C}(u)^{d - k} = \widetilde{\gamma}(u) C(u)^d$. We would like to look at the existence of $\widetilde{\gamma}_i$ more explicitly and consider what the polynomial looks like in the case that $d$ is even, which is an assumption made in the usual setting gamma vectors are considered. The latter case will give use more information on what the modified gamma vector-like coefficients associated to the the ``basis elements'' $x^k + x^{d - k}$ generating the vector space of reciprocal polynomials. Note that the gamma vector itself is equal to the linear combination of these modified gamma vector-like coefficients using the same coefficients as the corresponding $x^k + x^{d - k}$.  \\

	\begin{prop} \label{modgambasis}
		Suppose that $d$ is even. For any $0 \le k \le \frac{d}{2}$, we have that \[ x^k + x^{d - k} = \sum_{i \in I} \widetilde{\gamma}_i x^i (x + 1)^{d - 2i} \] for some numbers $\widetilde{\gamma}_i$ indexed over a finite set of nonnegative integers $I$ (depending on $\ell$) and they can be computed recursively. \\
		
		In particular, let $P_r = P_r(z)$ be the reciprocal polynomial $\sum_{i = 0}^{\frac{d}{2}} \widetilde{\gamma}_i^r w^{ \frac{d}{2} - i }$ of the modified $\gamma$-polynomial associated to $x^{\frac{d}{2} - r} + x^{\frac{d}{2} + r}$. Then, $Q_r(z) \coloneq P_r(z + 2)$ satisfies the same recursive relation and has the same initial polynomial at $r = 0$ as the Chebyshev polynomials of the first kind with the variable (say $w$) halved (i.e. replaced by $\frac{w}{2}$). 
	\end{prop}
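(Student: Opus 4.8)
The plan is to treat the two claims separately: first the existence, uniqueness and recursive computation of the $\widetilde{\gamma}_i$ by a triangularity argument, and then the Chebyshev recursion for the $Q_r$ by transporting the defining identity through the substitution $x=\widetilde{C}(u)$ already in play above.

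For the first claim, observe that the $\tfrac{d}{2}+1$ polynomials $x^i(x+1)^{d-2i}$, $0\le i\le \tfrac{d}{2}$, all lie in the $(\tfrac{d}{2}+1)$-dimensional space $V_d$ of polynomials $p$ of degree $\le d$ with $t^dp(1/t)=p(t)$, and $x^i(x+1)^{d-2i}$ has $x$-adic order exactly $i$; hence they form a basis of $V_d$ which is unitriangular with respect to the ``monomial'' basis $\{x^k+x^{d-k}\}_{0\le k<d/2}\cup\{x^{d/2}\}$. This produces the $\widetilde{\gamma}_i$, uniquely, together with the usual peeling recursion: the constant term forces $\widetilde{\gamma}_0$, then $(x^k+x^{d-k})-\widetilde{\gamma}_0(x+1)^d$ equals $x$ times an element of $V_{d-2}$, and one iterates in $d$. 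For $k<\tfrac{d}{2}$ one in fact has $\widetilde{\gamma}_0=0$ and $x^k+x^{d-k}=x(x^{k-1}+x^{d-k-1})$, so $P_r$ is independent of $d$ as soon as $d\ge 2r$; I would therefore reduce the second claim to the case $d=2r$, i.e. to the identity $1+x^{2r}=\sum_{i=0}^{r}\widetilde{\gamma}_i^r\,x^i(x+1)^{2r-2i}$.

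For the second claim I would substitute $x=\widetilde{C}(u)$ into this identity and use the two facts recorded above, namely that $x^i(x+1)^{2r-2i}$ becomes $u^iC(u)^{2r}$ and that $C(u)^{2r}=(\widetilde{C}(u)/u)^r$. The right-hand side becomes $\widetilde{C}(u)^r\sum_i\widetilde{\gamma}_i^r u^{i-r}$ while the left-hand side is $\widetilde{C}(u)^r\bigl(\widetilde{C}(u)^{-r}+\widetilde{C}(u)^r\bigr)$; cancelling $\widetilde{C}(u)^r$ in $\mathbb{C}((u))$, where $\widetilde{C}(u)=u+\cdots$ is a unit, gives $\widetilde{C}(u)^{-r}+\widetilde{C}(u)^r=\sum_{i=0}^{r}\widetilde{\gamma}_i^r u^{-(r-i)}=P_r(1/u)$. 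Now change variables by $v=\widetilde{C}(u)$, which is an invertible substitution with inverse $u=v/(v+1)^2$, so that $1/u=(v+1)^2/v=v+2+v^{-1}$; transporting the identity and writing $z=v+v^{-1}$ yields $Q_r(z)=P_r(z+2)=v^r+v^{-r}$ (an equality of Laurent polynomials in $v$, since $v=\widetilde{C}(u)$ is transcendental over $\mathbb{C}$). From this normal form the claim is immediate: $Q_0=2$, $Q_1(z)=z$, and summing the identities at $r+1$ and $r-1$ gives $Q_{r+1}(z)+Q_{r-1}(z)=(v+v^{-1})(v^r+v^{-r})=zQ_r(z)$, i.e. $Q_{r+1}=zQ_r-Q_{r-1}$ --- precisely the three-term recursion and initial data $\mathcal{T}_0=2$, $\mathcal{T}_1=z$, $\mathcal{T}_{r+1}=z\mathcal{T}_r-\mathcal{T}_{r-1}$ of the first-kind Chebyshev polynomials in the normalization $\mathcal{T}_r(z)=2T_r(z/2)$ (equivalently $2T_r$ after $z\mapsto 2z$). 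The same computation shows each $Q_r$ is monic of degree $r$, matching $\deg\mathcal{T}_r=r$.

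I expect the routine parts to be the triangularity in the first claim and keeping track of which ring each identity lives in. The genuine point --- the ``main obstacle'', such as it is --- is conceptual rather than computational: after cancelling $\widetilde{C}(u)^r$ one is asserting that the a priori infinite Laurent series $\widetilde{C}(u)^{-r}+\widetilde{C}(u)^r$ collapses to the Laurent polynomial $P_r(1/u)$, and one must be comfortable that this is legitimate; it is, because the whole chain of equalities was obtained by substituting a power series with vanishing constant term into a polynomial identity in $x$ and then rearranging inside $\mathbb{C}((u))$. A secondary thing to pin down is the normalization of ``Chebyshev polynomials of the first kind'' intended, and in particular that for $r=0$ the relevant element is read as $x^{d/2-0}+x^{d/2+0}=2x^{d/2}$ --- which forces $Q_0=2$ and is exactly what makes the three-term recursion close up at $r=1$ --- rather than as the single middle basis vector $x^{d/2}$.
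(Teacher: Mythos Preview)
Your argument is correct, but the route differs from the paper's in two places, and in the second it is more circuitous than necessary.

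For the first claim, the paper does not use a triangularity/basis argument. It divides $x^k+x^{d-k}$ by $x^{d/2}$ to reduce to expressing $x^\ell+x^{-\ell}$ as a polynomial in $(x+1)^2/x$, then establishes existence constructively via the same three-term recursion you derive later. Your linear-algebra argument is cleaner for pure existence and uniqueness, while the paper's approach has the virtue of producing the recursion and the existence simultaneously.

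For the second claim, your detour through the Catalan substitution $x=\widetilde{C}(u)$ is correct but self-cancelling: after substituting and then changing variables back via $v=\widetilde{C}(u)$, you arrive at the identity $Q_r(v+v^{-1})=v^r+v^{-r}$, which is exactly what the paper obtains \emph{directly} by setting $w=x$ in the divided identity $x^\ell+x^{-\ell}=P_\ell\!\left((x+1)^2/x\right)$ and writing $(x+1)^2/x=(x+x^{-1})+2$. From there both proofs use the elementary identity $(w+w^{-1})(w^r+w^{-r})=(w^{r+1}+w^{-(r+1)})+(w^{r-1}+w^{-(r-1)})$ to get $P_{r+1}(z)=(z-2)P_r(z)-P_{r-1}(z)$, hence $Q_{r+1}(z)=zQ_r(z)-Q_{r-1}(z)$. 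So your machinery of formal Laurent series in $u$ and the worry about ``collapsing'' $\widetilde{C}(u)^{-r}+\widetilde{C}(u)^r$ to a Laurent polynomial are avoidable: the whole computation lives in $\mathbb{C}[x,x^{-1}]$ from the start. Your observation that one must read the $r=0$ term as $2x^{d/2}$ to get $Q_0=2$ (matching the monic normalization $2T_r(z/2)$) is a useful clarification that the paper's proof leaves implicit.
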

	
	\begin{proof}
		Since $d$ is even, we can set $\ell = \frac{d}{2} - k$ to get $x^k + x^{d - k} = x^{\frac{d}{2} - \ell} + x^{\frac{d}{2} + \ell}$. After dividing by $x^{\frac{d}{2}}$, the desired expression is given by \[ x^\ell + x^{-\ell} =  \sum_{i \in I} \gamma_i x^{-\left( \frac{d}{2} - i \right)} (x + 1)^{d - 2i} = \sum_{i \in I} \gamma_i \left( \frac{(x + 1)^2}{x} \right)^{\frac{d}{2} - i}. \] 
		
		If $\ell = 1$, we have that $x + x^{-1} = \frac{x^2 + 1}{x} = \frac{(x + 1)^2}{x} - 2$. As for $x^2 + x^{-2} = (x + x^{-1})^2 - 2$, which itself is a polynomial in $\frac{(x + 1)^2}{x}$ with $x^2 + x^{-2} = \left( \frac{(x + 1)^2}{x} - 2 \right)^2 - 2 = \left( \frac{(x + 1)^2}{x} \right)^2 - 4 \left( \frac{(x + 1)^2}{x} \right) + 4 - 2$. The same recursion applies for exponents of the form $\ell = 2^m$. \\
		
		In general, we can use the expression \[ (w + w^{-1})(w^\ell + w^{-\ell}) = (w^{\ell - 1} + w^{-(\ell - 1)}) + (w^{\ell + 1} + w^{-(\ell + 1)}) \] to obtain a recursion. Writing $P_r$ for the polynomial such that for $w^r + w^{-r} = P_r \left( \frac{(w + 1)^2}{w} \right)$, we have that 
		
		\begin{align*}
			P_r \left( \frac{(w + 1)^2}{w} \right)  P_1 \left( \frac{(w + 1)^2}{w} \right) &= (w^{r + 1} + w^{-(r + 1)}) + P_{r - 1} \left( \frac{(w + 1)^2}{w} \right) \\
			\Longrightarrow w^{r + 1} + w^{-(r + 1)} &= P_r \left( \frac{(w + 1)^2}{w} \right)  P_1 \left( \frac{(w + 1)^2}{w} \right) - P_{r - 1} \left( \frac{(w + 1)^2}{w} \right) \\
			\Longrightarrow P_{r + 1}(z) &= P_r(z) P_1(z) - P_{r - 1}(z) \\
			&= (z - 2) P_r(z) - P_{r - 1}(z),
		\end{align*}
		
		which is a polynomial when $P_r(z)$ and $P_{r - 1}(z)$ are. Replacing $z$ by $w \coloneq z + 2$ yields the recursion for the Chebyshev polynomials of the first kind with the variable $w$ replaced by $\frac{w}{2}$ (see p. 8 of \cite{AGSMMS}).

	\end{proof}
	
	When we substitute in $x = \widetilde{C}(u)$, we can get some more explicit information on what the coefficients are and what they count.

	\begin{cor} 
		Suppose that $d$ is even. Let $C(u)$ be the generating function of the Catalan numbers and $\widetilde{C}(u) = C(u) - 1$. Given $0 \le \ell \le \frac{d}{2}$, there is a polynomial $Q(u)$ of degree $\le \ell$ that \[ \widetilde{C}(u)^{ \frac{d}{2} - \ell } + \widetilde{C}(u)^{\frac{d}{2} + \ell} = [u^{\frac{d}{2} - \ell} Q(u)] C(u)^d. \] 
		
		Equivalently, any $0 \le k \le \frac{d}{2}$ has a polynomial $Q(u)$ of degree $\le \frac{d}{2} - k$ such that \[ \widetilde{C}(u)^k + \widetilde{C}(u)^{d - k} = [u^k Q(u)] C(u)^d. \]
		
	\end{cor}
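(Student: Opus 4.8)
The plan is to push the substitution $x = \widetilde{C}(u)$ through the basis expansion of Proposition \ref{modgambasis} while keeping careful track of degrees. First I would set $\ell = \frac{d}{2} - k$, so that $x^k + x^{d-k} = x^{d/2}\,(x^\ell + x^{-\ell})$, and invoke the proof of Proposition \ref{modgambasis} to write $x^\ell + x^{-\ell} = P_\ell\!\left(\frac{(x+1)^2}{x}\right)$ for the degree-$\ell$ polynomial $P_\ell$ governed by $P_0 = 2$, $P_1(z) = z-2$, and $P_{r+1}(z) = (z-2)P_r(z) - P_{r-1}(z)$. Expanding $P_\ell(z) = \sum_{j=0}^{\ell} c_j z^j$ and clearing the denominators $x^j$ against $x^{d/2}$ turns this into $x^k + x^{d-k} = \sum_{i=k}^{d/2} c_{d/2-i}\, x^i (x+1)^{d-2i}$ after the reindexing $i = \frac{d}{2} - j$; the lower limit $i = k$ is exactly where I would use $k \ge 0$ to be sure no negative powers of $x$ survive.

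Next I would substitute $x = \widetilde{C}(u)$, using the fact already recorded before the statement that $x^i(x+1)^{d-2i}$ becomes $u^i C(u)^d$ (which follows from $\widetilde{C}(u)+1 = C(u)$ and $\frac{\widetilde{C}(u)}{(\widetilde{C}(u)+1)^2} = u$). Applying this term by term to the expansion from the previous paragraph gives
\[ \widetilde{C}(u)^k + \widetilde{C}(u)^{d-k} = \Bigl(\sum_{i=k}^{d/2} c_{d/2-i}\, u^i\Bigr) C(u)^d = \bigl[u^k Q(u)\bigr] C(u)^d, \]
where $Q(u) = \sum_{m=0}^{\ell} c_{\ell-m}\, u^m$ is the coefficient-reversal of $P_\ell$ (equivalently $Q(u) = u^\ell P_\ell(1/u)$) and therefore has degree $\le \ell = \frac{d}{2}-k$. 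This is precisely the ``equivalently'' formulation of the corollary; the first formulation is the same identity with $k$ rewritten as $\frac{d}{2}-\ell$, and the polynomial $Q$ there is the same object with $\ell$ replaced by its definition.

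I do not expect a genuine obstacle. The one point requiring a moment's care is identifying the index set $I$ from Proposition \ref{modgambasis}: for the particular basis vector $x^k + x^{d-k}$ it must be exactly $\{k, k+1, \dots, \frac{d}{2}\}$, and this is forced purely by $\deg P_\ell = \ell$ together with the shift $i = \frac{d}{2} - j$. Once that is pinned down, both the degree bound on $Q$ and the fact that $Q$ is an honest polynomial (no negative powers of $u$) are immediate from $0 \le k \le \frac{d}{2}$.
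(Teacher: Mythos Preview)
Your argument is correct and follows essentially the same route as the paper: both factor through the polynomial $P_\ell$ of Proposition \ref{modgambasis}, use that $\frac{(\widetilde{C}(u)+1)^2}{\widetilde{C}(u)} = \frac{1}{u}$, and arrive at $Q(u) = u^\ell P_\ell(1/u)$ as a polynomial of degree $\le \ell$. The only cosmetic difference is that the paper first factors out $\widetilde{C}(u)^{d/2} = u^{d/2} C(u)^d$ and then proves the lemma $\widetilde{C}(u)^\ell + \widetilde{C}(u)^{-\ell} = P_\ell(1/u)$ by induction via the recursion, whereas you expand $P_\ell$ into monomials up front and push the substitution $x^i(x+1)^{d-2i} \mapsto u^i C(u)^d$ through term by term; your version has the minor advantage of making the identification $Q(u) = u^\ell P_\ell(1/u)$ explicit rather than implicit.
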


	\begin{proof}

		We now look at details on what the polynomial $Q(u)$ in the ``usual'' initial conditions to get some information about the behavior of the gamma vector on basis elements of reciprocal polynomials. \\
		
		The case $\ell = 0$ follows immediately from taking the $\frac{d}{2}^{\text{th}}$ power of each side of the identity $\widetilde{C}(u) = u C(u)^2$. For the general case, we  start by factoring out $\widetilde{C}(u)^{\frac{d}{2}} = u^{\frac{d}{2}} C(u)^d$. Afterwards, we use the following property:
		
		\begin{lem}
			Given $0 \le \ell \le \frac{d}{2}$, there is a polynomial $P(u)$ of degree $\le \ell$ such that \[ \widetilde{C}(u)^{-\ell} + \widetilde{C}(u)^\ell = \frac{P(u)}{u^\ell}. \]
		\end{lem}
		
		\begin{proof}
			After making the substitutions $z = \frac{(w + 1)^2}{w}$ and $w = \widetilde{C}(u)$  into the identities at the end of the proof of Proposition \ref{modgambasis}, we end up with \[ P_{r + 1} \left( \frac{1}{u} \right) = \left( \frac{1}{u} - 2 \right) P_r \left( \frac{1}{u} \right) - P_{r - 1} \left( \frac{1}{u} \right). \] If the statement holds for $P_r$ and $P_{r - 1}$, this identity implies that it also holds for $P_{r + 1}$. 
			
		\end{proof}

	\end{proof}
	
	These steps can be used to recover the following result on connections between palindromic polynomials and Chebyshev expansions (which is known although the situation with sources is complicated -- see \cite{AGSMMS} for further details). \\

	\begin{prop} (Proposition 3.2 on p. 11 of \cite{AGSMMS}) \label{recipcheb} \\
		Let $f$ be a palindromic polynomial with complex coefficients: 
		\[ f(t) = \sum_{k = 0}^{2n} a_k t^k \] where $a_{2n - k} = a_k$ for every $k$ in $\{ 0, \ldots, n \}$. Define a univariate polynomial $g$ by \[ g(u) \coloneq \sum_{j = 0}^n a_{n - j} T_j^{\textbf{monic}}(u) = a_n + \sum_{j = 1}^n 2 a_{n - j} T_j \left( \frac{u}{2} \right). \]
		
		Then \[ f(t) = t^n g(t + t^{-1}). \]
	\end{prop}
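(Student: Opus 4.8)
The plan is to reduce the statement to the single fact that $t^j + t^{-j}$ is a polynomial in the variable $u = t + t^{-1}$ and then let the palindromic symmetry of $f$ dictate the coefficients; the argument is purely formal, so it works over any field.

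First I would pass from $f(t)$ to the Laurent polynomial $t^{-n} f(t) = \sum_{k=0}^{2n} a_k t^{k-n}$ and reindex by $m = k-n$, obtaining $t^{-n} f(t) = \sum_{m=-n}^{n} a_{m+n}\, t^{m}$. Written with $k = m+n$, the hypothesis $a_{2n-k} = a_k$ becomes $a_{n-m} = a_{m+n}$, i.e. $t^{-n} f(t)$ is invariant under $t \mapsto t^{-1}$; pairing the $m = j$ and $m = -j$ terms for $j \ge 1$ and isolating $m = 0$ then gives the symmetric expansion $t^{-n} f(t) = a_n + \sum_{j=1}^{n} a_{n-j}\,(t^{j} + t^{-j})$.

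Next I would invoke the recursion already set up in the proof of Proposition \ref{modgambasis}: taking the indeterminate there to be $w = t$ and using $\tfrac{(t+1)^2}{t} = (t+t^{-1}) + 2$, one has $t^{r} + t^{-r} = P_r\!\bigl(\tfrac{(t+1)^2}{t}\bigr) = Q_r(t + t^{-1})$, where $Q_r(z) = P_r(z+2)$ satisfies $Q_{r+1}(z) = z\,Q_r(z) - Q_{r-1}(z)$ with $Q_0(z) = 2$ and $Q_1(z) = z$ --- precisely the monic Chebyshev recursion --- so that $t^{j} + t^{-j} = T_j^{\textbf{monic}}(t+t^{-1})$ for all $j \ge 1$. (Alternatively this is a one-line induction from $(t+t^{-1})(t^j+t^{-j}) = (t^{j+1}+t^{-(j+1)}) + (t^{j-1}+t^{-(j-1)})$.) Substituting into the symmetric expansion and multiplying back by $t^{n}$ gives $f(t) = t^{n}\bigl(a_n + \sum_{j=1}^{n} a_{n-j}\,T_j^{\textbf{monic}}(t+t^{-1})\bigr) = t^{n} g(t+t^{-1})$.

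Finally, the two displayed forms of $g$ agree because $T_j^{\textbf{monic}}(u) = 2\,T_j(u/2)$ for $j \ge 1$ --- both equal $2\cos(j\theta)$ at $u = 2\cos\theta$, which is also $t^j + t^{-j}$ with $t = e^{i\theta}$ --- whence $\sum_{j=0}^{n} a_{n-j} T_j^{\textbf{monic}}(u) = a_n + \sum_{j=1}^{n} 2 a_{n-j} T_j(u/2)$. I do not expect a genuine obstacle; the only point requiring care is a bookkeeping one, namely that the recursion forces $T_0^{\textbf{monic}} = 2$ (from $t^2 + t^{-2} = (t+t^{-1})^2 - 2$) whereas in the formula for $g$ the $j=0$ term carries weight $1$. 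This is consistent exactly because the constant term $a_n$ of the symmetric Laurent polynomial is \emph{not} doubled, unlike the terms with $j \ge 1$, so the $j=0$ summand is simply the bare constant $a_n$.
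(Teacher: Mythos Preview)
Your proof is correct and is essentially the approach the paper itself indicates: the proposition is stated without a standalone proof, with the remark that ``these steps can be used to recover'' it --- meaning precisely the recursion of Proposition~\ref{modgambasis} showing $t^{j}+t^{-j}=Q_j(t+t^{-1})$ with $Q_j$ satisfying the monic Chebyshev recurrence. You have spelled out that sketch in full, including the palindromic pairing $t^{-n}f(t)=a_n+\sum_{j\ge 1}a_{n-j}(t^j+t^{-j})$ and the careful handling of the $j=0$ term; nothing further is needed.
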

	
	\begin{rem} \label{oddeg}
		In some sense, the case of odd degree reciprocal/palindromic polynomials reduces to the even degree case since odd degree reciprocal polynomials are always multiples of $1 + t$ (Proposition 3.4 on p. 11 of \cite{AGSMMS}). We can divide by $1 + t$ and only consider the even degree case or expand terms and use identities involving Chebyshev polynomials. \\
	\end{rem}
	
	\begin{thm} \textbf{(Gamma vectors and inverted Chebyshev expansions)} \label{gamchebinv} \\
		Let $h(t) = h_0 + h_1 t + \ldots + h_{d - 1} t^{d - 1} + h_d t^d$ be a reciprocal polynomial satisfying the relations $h_k = h_{d - k}$ for $0 \le k \le \frac{d}{2}$. Then, the gamma-polynomial associated to $h(t)$ is equal to \[ \gamma(u) = u^{ \frac{d}{2} } g \left( \frac{1}{u} - 2 \right), \] where \[ g(u) \coloneq h_{ \frac{d}{2} } + 2 \sum_{ j = 1}^{ \frac{d}{2} } h_{ \frac{d}{2} - j } T_j \left( \frac{u}{2} \right). \] This is a sort of ``inverted Chebyshev basis expansion''. \\
		
		Alternatively, we can rewrite this as \[ u^{ \frac{d}{2}  }  \gamma \left( \frac{1}{u} \right)  = g(u - 2) \] or \[ (u + 2)^{ \frac{d}{2} } \gamma \left( \frac{1}{u + 2} \right) = g(u). \]
	\end{thm}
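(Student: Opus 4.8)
The plan is to combine Proposition \ref{recipcheb} with the functional-equation identity $u^{d/2} h(\widetilde{C}(u)) = \gamma(u)\, \widetilde{C}(u)^{d/2}$ established at the start of this section. First I would apply Proposition \ref{recipcheb} with $n = \tfrac{d}{2}$ (so $2n = d$) and $a_k = h_k$, which is legitimate since $h$ is palindromic of even degree; this produces the Laurent-polynomial identity $h(t) = t^{d/2} g(t + t^{-1})$ with precisely the polynomial $g(u) = h_{d/2} + 2\sum_{j=1}^{d/2} h_{d/2-j} T_j(u/2)$ appearing in the statement.

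Next I would substitute $t = \widetilde{C}(u)$ into that identity. The only computation needed is the image of $t + t^{-1}$: from $\widetilde{C}(u) = u\,(\widetilde{C}(u) + 1)^2$ one gets $\tfrac{\widetilde{C}(u)}{u} = \widetilde{C}(u)^2 + 2\widetilde{C}(u) + 1$, hence
\[ \widetilde{C}(u) + \widetilde{C}(u)^{-1} = \frac{\widetilde{C}(u)^2 + 1}{\widetilde{C}(u)} = \frac{1}{u} - 2. \]
(Equivalently, under the change of variable $u = \tfrac{t}{(1+t)^2}$ used to define $\gamma$ one has $\tfrac{1}{u} - 2 = \tfrac{(1+t)^2 - 2t}{t} = t + t^{-1}$, the same identity read backwards.) Using also $\widetilde{C}(u)^{d/2} = u^{d/2} C(u)^d$, the right-hand side becomes $u^{d/2} C(u)^d\, g\!\left(\tfrac{1}{u} - 2\right)$, while the left-hand side $h(\widetilde{C}(u))$ equals $\gamma(u)\, C(u)^d$. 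Cancelling the unit $C(u)^d$ (note $C(0) = 1$) yields $\gamma(u) = u^{d/2} g\!\left(\tfrac{1}{u} - 2\right)$.

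The point that deserves care — and the closest thing to an obstacle here — is the bookkeeping among polynomials, power series, and Laurent series. The substitution $t \mapsto \widetilde{C}(u)$ sends $t$ to a power series with zero constant term, so it must be read inside the field $\mathbb{Q}((u))$ of formal Laurent series, where $\widetilde{C}(u)$ is a unit and the Laurent-polynomial identity of Proposition \ref{recipcheb} continues to hold; and although $u^{d/2} g\!\left(\tfrac{1}{u} - 2\right)$ a priori lies only in $\mathbb{Q}((u))$, the degree bound $\deg g \le \tfrac{d}{2}$ forces it to be an honest polynomial of degree at most $\tfrac{d}{2}$, consistent with the known shape of $\gamma$. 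Beyond this I expect no genuine difficulty: all the content sits in Proposition \ref{recipcheb} together with the identity $\widetilde{C}(u) + \widetilde{C}(u)^{-1} = \tfrac{1}{u} - 2$.

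Finally, the two alternative forms are pure substitution: replacing $u$ by $\tfrac{1}{u}$ in $\gamma(u) = u^{d/2} g\!\left(\tfrac{1}{u} - 2\right)$ gives $u^{d/2}\gamma\!\left(\tfrac{1}{u}\right) = g(u - 2)$, and then replacing $u$ by $u + 2$ gives $(u + 2)^{d/2}\gamma\!\left(\tfrac{1}{u+2}\right) = g(u)$.
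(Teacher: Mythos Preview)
Your proof is correct and follows essentially the same route as the paper: apply Proposition \ref{recipcheb} to write $h(t) = t^{d/2} g(t + t^{-1})$, substitute $t = \widetilde{C}(u)$, simplify $\widetilde{C}(u) + \widetilde{C}(u)^{-1} = \tfrac{1}{u} - 2$ via the defining relation $\widetilde{C}(u)/(\widetilde{C}(u)+1)^2 = u$, and then cancel $C(u)^d$ using $h(\widetilde{C}(u)) = \gamma(u)C(u)^d$ and $\widetilde{C}(u)^{d/2} = u^{d/2}C(u)^d$. Your added remarks about working in $\mathbb{Q}((u))$ and the degree bound ensuring polynomiality are a welcome bit of extra care but do not alter the argument.
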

	
	\begin{proof}
		Recall that $\gamma(u) = \frac{h(\widetilde{C}(u))}{C(u)^d}$, where $C(u)$ is the generating function of the Catalan numbers and $\widetilde{C}(u) \coloneq C(u) - 1$. Substituting $t = \widetilde{C}(u)$ into Proposition \ref{recipcheb}, we have that 
		
		\begin{align*}
			h(\widetilde{C}(u)) &= \widetilde{C}(u)^{ \frac{d}{2} } g (\widetilde{C}(u) + \widetilde{C}(u)^{-1}) \\
			&= u^{ \frac{d}{2} } C(u)^d g (\widetilde{C}(u) + \widetilde{C}(u)^{-1}) \\
			\Longrightarrow \gamma(u) &= \frac{h(\widetilde{C}(u))}{C(u)^d} \\
			&= u^{ \frac{d}{2} } g (\widetilde{C}(u) + \widetilde{C}(u)^{-1})
		\end{align*}
		
		since $\widetilde{C}(u) = u C(u)^2$. The expression inside $g$ can be simplified using $\frac{\widetilde{C}(u)}{( \widetilde{C}(u) + 1 )^2} = u$ and we have
		
		\begin{align*}
			\gamma(u) &= u^{ \frac{d}{2} } g (\widetilde{C}(u) + \widetilde{C}(u)^{-1}) \\
			&= u^{ \frac{d}{2} } g \left( \frac{\widetilde{C}(u)^2 + 1}{\widetilde{C}(u)} \right) \\
			&= u^{ \frac{d}{2} } g \left( \frac{\widetilde{C}(u)^2 + 2 \widetilde{C}(u) + 1}{\widetilde{C}(u)} - \frac{2 \widetilde{C}(u) }{\widetilde{C}(u) } \right) \\
			&= u^{ \frac{d}{2} } g \left( \frac{1}{u} - 2 \right).
		\end{align*} 
		
	\end{proof}
	
	As an immediate consequence, we obtain a higher dimensional counterpart to the following recent result of Bel-Afia--Meroni--Telen \cite{BAMT} on Chebyshev varieties, which share key properties with toric varieties and serve as a counterpart of them in the context of sparse polynomial root finding:
	
	\begin{defn} (Bel-Afia--Meroni--Telen, p. 6 of \cite{BAMT}) \\
		Let $A = (a, b) \in \mathbb{N}^2$ and consider the curve parametrized by the map $T_{a, b} : \mathbb{C} \longrightarrow \mathbb{C}^2$ with $T_{a, b}(t) = (T_a(t), T_b(t))$. The Zariski closure of this image is denoted by $\mathcal{X}_{a, b, T}$. \\ \\
	\end{defn}
	
	\begin{thm} (Bel-Afia--Meroni--Telen, Theorem 3.5 on p. 7 of \cite{BAMT}) \label{chebcurvhyp} \\
		For $a \in \mathbb{N} \setminus 0$, the Chebyshev plane curve $\mathcal{X}_{a, a + 1, T}$ is hyperbolic with respect to the origin. That is, any line passing through the origin intersects the curve $\mathcal{X}_{a, a + 1, T}$ in $\deg(\mathcal{X}_{a, a + 1, T}) = a + 1$ real points, counting multiplicities. 
	\end{thm}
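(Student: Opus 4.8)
The plan is to convert the hyperbolicity statement into the purely one‑variable claim that, for every real pair $(\alpha,\beta)\neq(0,0)$, the ``pencil'' polynomial $p_{\alpha,\beta}(s):=\alpha T_a(s)+\beta T_{a+1}(s)$ has only real roots, and then to prove that claim by the classical interlacing property of consecutive Chebyshev polynomials — equivalently, by the palindromic reformulation underlying Proposition~\ref{recipcheb} and Corollary~\ref{cheblinreal}. The count of $a+1$ intersection points with multiplicity is Bézout (the curve is irreducible of degree $a+1\ge 2$, so no line is a component); what has to be shown is that these points are all real.

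\textbf{Reduction.} A real line through the origin is $\ell=\{\alpha x+\beta y=0\}$, and a point $(T_a(s),T_{a+1}(s))$ of the affine Chebyshev curve lies on $\ell$ precisely when $p_{\alpha,\beta}(s)=0$. Since $T_a,T_{a+1}$ are polynomials, the parametrization $s\mapsto(T_a(s),T_{a+1}(s))$ is a proper map $\mathbb{C}\to\mathbb{C}^2$, hence surjects onto the affine part of $\mathcal{X}_{a,a+1,T}$; consequently every finite point of $\ell\cap\mathcal{X}_{a,a+1,T}$ is real as soon as every root of $p_{\alpha,\beta}$ is real, and the unique point at infinity of the curve, $(0:1:0)$, lies on $\ell$ and is real. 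So it suffices to prove that $p_{\alpha,\beta}$ is real‑rooted for all real $\alpha,\beta$.

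\textbf{Real‑rootedness of the pencil.} Here I would use that the zeros of $T_a$ and of $T_{a+1}$ are real, simple, and strictly interlace (they are $\cos\frac{(2k-1)\pi}{2a}$ and $\cos\frac{(2k-1)\pi}{2(a+1)}$). Assume $\beta\neq 0$ and let $w_1<\dots<w_a$ be the zeros of $T_a$; then $p_{\alpha,\beta}(w_j)=\beta\,T_{a+1}(w_j)$, and since exactly one zero of $T_{a+1}$ lies between consecutive $w_j$'s these values alternate in sign, forcing $a-1$ roots of $p_{\alpha,\beta}$ inside $(w_1,w_a)$. Comparing the sign of $p_{\alpha,\beta}$ at $w_1$ (resp. $w_a$) with its sign at $-\infty$ (resp. $+\infty$), controlled by the leading term $\beta\,2^{a}s^{a+1}$, yields one more real root in each of $(-\infty,w_1)$ and $(w_a,+\infty)$ — in total $a+1=\deg p_{\alpha,\beta}$ real roots. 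When $\beta=0$ the pencil is $\alpha T_a$, contributing its $a$ real zeros, which matches the case where the remaining intersection is the point at infinity. Alternatively — and this is the version aligned with the present paper — substituting $s=\tfrac12(t+t^{-1})$ and using $2T_n(s)=t^n+t^{-n}$ turns $p_{\alpha,\beta}(s)=0$ into the palindromic equation $\beta(t^{2a+2}+1)+\alpha t(t^{2a}+1)=0$, whose associated $g$ in Proposition~\ref{recipcheb} is exactly $p_{\alpha,\beta}$; so by Corollary~\ref{cheblinreal} hyperbolicity with respect to the origin is equivalent to every root of this palindromic polynomial being real or on the unit circle, which one checks by writing $\alpha+\beta e^{i\phi}=\rho(\phi)e^{i\psi(\phi)}$ so that on $|t|=1$ the polynomial (divided by $t^{a+1}$) is $2\rho(\phi)\cos(a\phi+\psi(\phi))$ and counting its zeros over $\phi\in[0,2\pi)$ by a winding‑number argument.

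\textbf{Main obstacle.} The genuine mathematical content — real‑rootedness of $\alpha T_a+\beta T_{a+1}$ — is classical, so the theorem is essentially immediate once the dictionary above is in place. The part that needs care is the bookkeeping in the Reduction: checking that the Chebyshev parametrization is birational when $\gcd(a,a+1)=1$ (so that $\deg\mathcal{X}_{a,a+1,T}=a+1$ and local line–curve intersection multiplicities agree with root multiplicities of $p_{\alpha,\beta}$), and accounting for the point at infinity and its reality in the degenerate cases $\alpha\beta=0$. A clean way to sidestep the scheme‑theoretic details is to prove the statement first for a generic real line through the origin — where the $a+1$ intersections are transverse and real by the sign‑change count — and then extend it to all such lines by a semicontinuity/limiting argument.
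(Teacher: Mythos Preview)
The paper does not prove this theorem; it merely quotes it from \cite{BAMT} as motivation, and then proves Corollary~\ref{cheblinreal} as a ``counterpart''. So there is no proof in the present paper to compare your attempt against.

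That said, your approach is sound and is essentially the standard one: reduce hyperbolicity with respect to the origin to real-rootedness of the pencil $\alpha T_a+\beta T_{a+1}$, and deduce the latter from the strict interlacing of the zeros of $T_a$ and $T_{a+1}$ (equivalently, from Obreschkoff/Hermite--Kakeya). One small inaccuracy in your Reduction paragraph: the point at infinity $(0:1:0)$ lies on the line $\alpha X+\beta Y=0$ only when $\beta=0$, not for every $\ell$; you handle the $\beta=0$ case correctly later, but the sentence as written overstates things. The bookkeeping you flag in ``Main obstacle'' (birationality of the parametrization since $\gcd(a,a+1)=1$, and matching root multiplicities of $p_{\alpha,\beta}$ with local intersection multiplicities) is indeed the only place requiring care, and your suggested workaround via genericity plus semicontinuity is acceptable. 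The alternative you sketch through Proposition~\ref{recipcheb} and Corollary~\ref{cheblinreal} is a nice tie-in to the paper's viewpoint, though note that Corollary~\ref{cheblinreal} as stated in the paper goes in the opposite direction (it characterizes real-rootedness of a Chebyshev linear combination via a reciprocal polynomial) and does not by itself supply the interlacing input you need.
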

	
	The idea is to combine our inverted Chebyshev expansion with known facts about the relationship between real-rootedness of the gamma polynomial and that of the $h$-polynomial. \\
	
	\begin{cor} \label{cheblinreal}
		Let $A(x) = a_0 + a_1 T_1(x) + \ldots + a_r T_r(x)$ be a linear combination of Chebyshev polynomials of the first kind. Then, $A(x)$ has real roots if and only if the reciprocal polynomial of degree $2r$ with coefficients of terms of degree $\le r$ given by the tuple $(a_r, \frac{a_{r - 1}}{2}, \ldots, \frac{a_0}{2})$ is real-rooted. \\
	\end{cor}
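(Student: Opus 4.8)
The plan is to read $A$ through the dictionary of Theorem~\ref{gamchebinv} in reverse: to recognize $A$, up to rescaling the variable, as the polynomial $g$ attached to the reciprocal polynomial $h$ of the statement, and then to transport real-rootedness along the chain $A \leftrightarrow g \leftrightarrow \gamma \leftrightarrow h$, where $\gamma$ is the $\gamma$-polynomial of $h$. Throughout I take $A$ to have real coefficients, so that ``has real roots'' is meaningful, and I assume $a_r \ne 0$ so that $h$ has honest degree $2r$ (otherwise rescale $r$).

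First I would let $h(t)$ be the degree-$2r$ reciprocal polynomial whose coefficients of degree $\le r$ are the entries of the given tuple. Unwinding Theorem~\ref{gamchebinv} with $d = 2r$, the polynomial attached to $h$ is $g(u) = h_r + 2\sum_{j=1}^{r} h_{r-j}\,T_j(u/2)$, and comparing coefficients in the basis $\{T_j\}$ identifies $g(2x)$ with $A(x)$ up to a nonzero scalar --- this is precisely the order reversal and the ``one entry fixed, the rest halved'' normalization built into the tuple. Hence $A$ is real-rooted iff $g$ is. I would then use the form $(u+2)^{r}\,\gamma\!\bigl(\tfrac{1}{u+2}\bigr) = g(u)$ of the identity in Theorem~\ref{gamchebinv}: it exhibits $g$ as the image of $\gamma$ under the coefficient reversal $\gamma(u) \mapsto u^{r}\gamma(1/u)$ followed by the real affine shift $u \mapsto u+2$. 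Equivalently, the roots of $g$ are the images of the roots of $\gamma$ under the real Möbius map $x \mapsto 1/(x+2)$, with the standard conventions that $\deg \gamma < r$ (a root ``at infinity'') corresponds to a root of $g$ at $u = -2$, and that $\gamma(0) \ne 0$ since $h_0 \ne 0$. Because a real Möbius transformation permutes $\mathbb{R}\cup\{\infty\}$ and carries non-real points to non-real points, $g$ is real-rooted iff $\gamma$ is. Putting the two steps together, $A$ is real-rooted iff $\gamma$ is.

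It then remains to feed in the equivalence recalled in the introduction (Observation~4.2 of \cite{Pet} and Remark~3.1.1 of \cite{Gal}): the $\gamma$-polynomial of a reciprocal polynomial is real-rooted exactly when the polynomial itself is. Chaining this onto the previous step gives ``$A$ real-rooted $\iff$ $h$ real-rooted'', which is the claim. I expect this last input, together with the root-location bookkeeping underneath it, to be the main obstacle: under the substitution $t \mapsto t + t^{-1}$ behind Proposition~\ref{recipcheb}, a real root of $A$ of absolute value less than $1$ comes from a non-real conjugate pair of roots of $h$, so a clean biconditional hinges on the fact that the $\gamma$-normalization already forces the roots of a real-rooted $\gamma$ into the range (namely, at most $1/4$) that corresponds under $x \mapsto 1/(2(x+1))$ to $|t| \ge 1$. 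Establishing that --- or, if necessary, restricting the statement by a hypothesis on the location of the roots of $\gamma$ --- is the delicate part; the rest is a change of variables together with the cited results.
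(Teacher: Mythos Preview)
Your strategy is the paper's: identify $A$ (after the rescaling $x\mapsto u/2$) with the polynomial $g$ of Theorem~\ref{gamchebinv}, pass from $g$ to $\gamma$ via the reciprocal/M\"obius step, and then invoke Observation~4.2 of \cite{Pet} for the equivalence between real-rootedness of $\gamma$ and of $h$. The paper does exactly this, packaging the reciprocal step as a one-line lemma and citing Observation~4.2 verbatim for the final equivalence.

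Your hesitation at the end is not excess caution---it is the actual gap, and the paper's proof shares it. The implication ``$\gamma$ real-rooted $\Rightarrow$ $h$ real-rooted'' is \emph{false} without the extra constraint you anticipate (every root of $\gamma$ in $(-\infty,1/4]$, equivalently every real root of $A$ satisfying $|x|\ge 1$). A minimal counterexample to the Corollary as an ``if and only if'' is $A(x)=T_1(x)=x$: here $r=1$, the associated palindromic polynomial is $h(t)=t^{2}+1$, its gamma polynomial is $\gamma(u)=1-2u$ with unique root $1/2>1/4$, and $A$ is real-rooted while $h$ is not. In the language of Proposition~\ref{recipcheb} this is exactly your diagnosis: $h(t)=t^{r}A\bigl((t+t^{-1})/2\bigr)$, and the map $t\mapsto (t+t^{-1})/2$ sends $\mathbb{R}\setminus\{0\}$ onto $(-\infty,-1]\cup[1,\infty)$, so a real root of $A$ in $(-1,1)$ corresponds to a conjugate pair of unit-circle roots of $h$. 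Only one direction survives unconditionally ($h$ real-rooted $\Rightarrow$ $A$ real-rooted); the paper's appeal to Observation~4.2 does not rescue the converse, because that observation, as quoted, already overstates the biconditional. Your instinct to restrict the statement (e.g.\ to roots of $A$ outside $(-1,1)$) is the correct fix.
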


	\begin{proof}
		We go backwards from our inverted Chebyshev basis expansion to the shifted reciprocal polynomial of the gamma polynomial. Our starting point is essentially $g(2u)$ from Theorem \ref{gamchebinv} with $\frac{d}{2} = r$ and $a_j$ replacing $h_{ \frac{d}{2} - j }$. This is equal to $(2u + 2)^r \gamma \left( \frac{1}{2u + 2} \right)$, where $\gamma(x)$ is the gamma polynomial associated to the reciprocal polynomial of degree $2r$ with terms of degree $\le r$ given by the tuple $(a_r, \frac{a_{r - 1}}{2}, \ldots, \frac{a_0}{2})$. It is real-rooted if and only if $x^r \gamma \left( \frac{1}{x} \right) $ is real-rooted. This follows from a translation of the variable by $-1$ and division of the new variable by $2$. We reduce to the case where the roots under consideration using the following: \\
		
		\begin{lem}
			A polynomial $Q(x)$ of degree $N$ is real-rooted if and only if its reciprocal polynomial $x^N Q \left( \frac{1}{x} \right)$ is real-rooted.
		\end{lem}
		
		\begin{proof}
			Given a polynomial $Q(x)$, we can write $Q(x) = x^k P(x)$ for some $x \nmid P(x)$ and $k \ge 0$. Let $d = \deg P$. Then, we have that the reciprocal polynomial of $Q(x)$ is given by 
			
			\begin{align*}
				x^{d + k} Q \left( \frac{1}{x} \right) &= x^{d + k} \cdot \frac{1}{x^k} P \left( \frac{1}{x} \right) \\
				&= x^d P \left( \frac{1}{x} \right).
			\end{align*}
			
			Since $x \nmid P(x)$, 0 is \emph{not} a root of $P$. This means that the roots of $x^d P \left( \frac{1}{x} \right)$ are $0$ and reciprocals $\frac{1}{\alpha}$ for the roots $\alpha$ of $P$ (which are nonzero). This implies that $x^{d + k} Q \left( \frac{1}{x} \right)$ is real-rooted if and only $P(x)$ is real-rooted. Thus, $x^{d + k} Q \left( \frac{1}{x} \right)$ is real-rooted if and only if $Q(x)$ is real-rooted. \\
		\end{proof}

		This implies that $x^r \gamma \left( \frac{1}{x} \right)$ is real-rooted if and only if $\gamma(x)$ is real-rooted. We now apply the following known result on the relationship between real-rootedness of the gamma polynomial and that of the $h$-polynomial. \\
		
		\begin{obs} (Observation 4.2 on p. 82 of \cite{Pet}) \\
			If $h$ has palindromic coefficients, then $h(t)$ is real-rooted if and only if $\gamma(h; t)$ has only real roots. Moreover, if the coefficients of $h(t)$ are nonnegative, then all the roots of $h$ are nonnegative and $\gamma(h; t)$ has nonnegative coefficients as well. Thus if $h(t)$ is nonnegative, real-rooted, and palindromic, then it is unimodal. \\
		\end{obs}
		
	\end{proof}
	
	\section{Compatibility between Chebyshev polynomials, type $A$ to $B$ subdivisions, and other linear transformations $\mathbb{R}[x] \longrightarrow \mathbb{R}[x]$}
	
	\subsection{Type A to type B subdivisions} \label{typeatob}

	Returning to the geometric combinatorics setting, recall that we are mainly considering $h$-vectors with $h_i \ge 0$ and $h_i = h_{d - i}$ (Dehn--Sommerville relations). Making the substitution $z \mapsto 2z + 2$, we can translate information on the reciprocal polynomial $z^{ \frac{d}{2} } \gamma(z^{-1})$ of $\gamma(z)$ with $z$ replaced by $2z + 2$ into information on nonnegative linear combinations of Chebyshev polynomials of the first kind. In addition, we use earlier results of Hetyei \cite{He2} Tchebyshev triangulations and linear combinations of Chebyshev polynomials of the first or second kind: \\

	\begin{defn} (Hetyei, Definition 1.1 on p. 571 and Definition 2.1 on p. 574 of \cite{He2}, Definition 2 on p. 921 of \cite{He3}) \label{tchebdef} \\
		\begin{enumerate}
			\item Given a polynomial $F(x) = a_n x^n + \ldots + a_1 x + a_0 \in \mathbb{R}[x]$, the \textbf{Tchebyshev transforms} $T(F)(x)$ and $U(F)(x)$ \textbf{of the first and second kind } of $F(x)$ are given by 
			\[ T(F)(x) \coloneq a_n T_n(x) + \ldots + a_1 T_1(x) + a_0 \]
			and \[ U(F)(x) \coloneq a_n U_{n - 1}(x) + \ldots + a_1 U_0(x), \]
			
			where $T_m(x)$ is Tchebyshev polynomial of the first kind, determined by \[ T_m(\cos(\alpha)) = \cos(m \cdot \alpha) \] and $U_m(x)$ is the Tchebyshev polynomial of the second kind for all $m \ge 0$. \\

			\item  We define a \textbf{Tchebyshev triangulation} $T(\Delta)$ of a simplicial complex $\Delta$ having $m = f_1(\Delta)$ as follows. Number the edges $e_1, \ldots, e_m$ in some order. We subdivide the edge $e_i$ into a path of length 2 by adding the midpoint $w_i$ and we also subdivide all the faces containing $e_i$ by performing a stellar subdivision. Note that the $f$-vector from any initial ordering yields the same $f$-vector (and thus the same $F$-vector) (Theorem 3 on p. 922 of \cite{He3}). \\
		\end{enumerate}
		
	\end{defn}

	The Tchebyshev subdivision is compatible with a modification of the $f$-polynomial of a simplicial complex. \\

	\begin{prop} (Hetyei, Hetyei--Nevo from Proposition 3.3 on p. 578 -- 579 and Proposition 4.4 on p. 580 -- 581 of \cite{He2}, Proposition 5.1 on p. 99 of \cite{HN})  \label{tchebcompat} \\
		Given a simplicial complex $S$, let $F_S(x) \coloneq f_S \left( \frac{x - 1}{2} \right)$. Here, $f_\Delta(t)$ is the usual $f$-polynomial of a simplicial complex $\Delta$.
		
		\begin{enumerate}
			\item The Tchebyshev transform of the $F$-polynomial of a simplicial complex is the $F$-polynomial of its Tchebyshev triangulation: \[ T(F_\Delta)(x) = F_{T(\Delta)}(x). \]
			
			\item  The Tchebyshev transform of the second kind of the $F$-polynomial of a simplicial complex is half of the $F$-polynomial of its Tchebyshev triangulation: \[ U(F_\Delta)(x) = \frac{1}{2} F_{U(\Delta)}(x). \] Note that a Tchebyshev subivision is also the same transformation taking the type $A$ Coxeter complex to the type $B$ Coxeter complex \cite{He3}. Dually, one can consider faces of type A and type B permutohedra (p. 919 of \cite{He3}). \\
		\end{enumerate}
	\end{prop}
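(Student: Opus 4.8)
The plan is to reduce the statement to the effect of a single stellar edge subdivision and then to iterate, reading off the accumulated correction from the three-term recurrence for the Tchebyshev polynomials. \emph{Step 1 (one edge).} I would first record how the $f$-polynomial $f_\Gamma(t) = \sum_{\sigma \in \Gamma} t^{|\sigma|}$ changes under the stellar subdivision $\operatorname{sd}_e(\Gamma)$ of an edge $e = \{u,v\}$ with new midpoint $w$. Writing $L = \lk_\Gamma(e)$, this subdivision deletes the faces $e \cup \sigma$ and inserts the faces $\{w\}\cup\sigma$, $\{u,w\}\cup\sigma$, $\{v,w\}\cup\sigma$ (all for $\sigma \in L$); counting vertices of each face gives $f_{\operatorname{sd}_e(\Gamma)}(t) = f_\Gamma(t) + t(1+t)\,f_L(t)$. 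Applying the substitution $t = \tfrac{x-1}{2}$, under which $t(1+t) = \tfrac{x^2-1}{4}$, this becomes $F_{\operatorname{sd}_e(\Gamma)}(x) = F_\Gamma(x) + \tfrac{x^2-1}{4}\,F_{\lk_\Gamma(e)}(x)$.

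\emph{Step 2 (iterate).} By the order-independence of the Tchebyshev triangulation (Theorem 3 of \cite{He3}) one may process the original edges $e_1,\dots,e_m$ in any convenient order, and summing the one-step formula over the intermediate complexes $\Delta = \Gamma_0, \Gamma_1, \dots, \Gamma_m = T(\Delta)$ yields $F_{T(\Delta)}(x) = F_\Delta(x) + \tfrac{x^2-1}{4}\sum_{i=1}^m F_{\lk_{\Gamma_{i-1}}(e_i)}(x)$. I would then prove $F_{T(\Delta)} = T(F_\Delta)$ by induction on $\dim \Delta$. The base case $\dim \Delta \le 1$ (graphs) is immediate, since there the edges are subdivided independently and each $\lk_\Gamma(e)$ is a point. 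For the inductive step the key geometric input is the identification of the links appearing in the sum: in the final complex the link of the midpoint $w_i$ is the Tchebyshev triangulation of the suspension of $\lk_\Delta(e_i)$, a complex of strictly smaller dimension to which the inductive hypothesis applies. Feeding this in, one wants to see that $\tfrac{x^2-1}{4}\sum_i F_{\lk_{\Gamma_{i-1}}(e_i)}(x)$ equals $T(F_\Delta)(x) - F_\Delta(x)$ monomial by monomial: since $T_n(\pm 1) = (\pm 1)^n$, we have $T_n(x) - x^n = (x^2-1)R_n(x)$ with $R_{n+1} = x^{n-1} + 2xR_n - R_{n-1}$, $R_0 = R_1 = 0$, so the claim amounts to matching $\tfrac14 \sum_i F_{\lk_{\Gamma_{i-1}}(e_i)}$ with the linear image of $F_\Delta$ under $x^n \mapsto R_n(x)$, and this match is exactly the Tchebyshev recurrence unwound.

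\emph{Step 3 (second kind).} For part (2) I would run the analogous subdivision count for the complex $U(\Delta)$, or else deduce it from part (1) via the classical identity $T_n'(x) = n\,U_{n-1}(x)$, which is precisely the passage ``after taking derivatives'' relating the first- and second-kind statements; on the geometric side, differentiating an $f$-polynomial corresponds (up to a degree shift) to summing the $f$-polynomials of vertex links, which is where the factor $\tfrac12$ in the second-kind statement should come from.

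\emph{Main obstacle.} The genuinely delicate point is Step 2: one must pin down the intermediate links $\lk_{\Gamma_{i-1}}(e_i)$ precisely and verify that the sum of their $F$-polynomials is exactly the quantity forced by $T$, \emph{independently of the chosen edge order} — in other words, one has to do the real combinatorial work that lies behind the order-independence statement and behind Hetyei's and Hetyei--Nevo's arguments. The one-edge formula and its translation to $F$-polynomials are routine by comparison.
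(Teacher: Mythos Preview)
The paper does not supply its own proof of this proposition: it is quoted verbatim from Hetyei \cite{He2} and Hetyei--Nevo \cite{HN}, so there is nothing in the present paper to compare your argument against. What follows is an assessment of your outline on its own terms.

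Your Step~1 is correct: for a stellar subdivision of an edge $e$ with link $L$ one has $f_{\operatorname{sd}_e(\Gamma)}(t)=f_\Gamma(t)+t(1+t)f_L(t)$, and the substitution $t=\tfrac{x-1}{2}$ gives the $\tfrac{x^2-1}{4}$ factor you state.

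The gap is in Step~2. The quantity you must control is $\sum_{i=1}^m F_{\lk_{\Gamma_{i-1}}(e_i)}(x)$, a sum of links in the \emph{intermediate} complexes $\Gamma_{i-1}$; these links genuinely depend on the chosen edge order, even though the total sum does not. Your proposed ``key geometric input'' is instead a statement about $\lk_{T(\Delta)}(w_i)$, the link of the midpoint in the \emph{final} complex. These are different objects, and you give no mechanism linking the two: knowing that $\lk_{T(\Delta)}(w_i)$ is a Tchebyshev triangulation of a suspension does not by itself evaluate $\sum_i F_{\lk_{\Gamma_{i-1}}(e_i)}$. To close this you would need either to exhibit a specific edge order for which each $\lk_{\Gamma_{i-1}}(e_i)$ is identifiable (and then actually carry out the recursion-matching you sketch), or to bypass the iteration entirely. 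The route Hetyei takes is closer to the latter: one observes that the Tchebyshev triangulation restricted to a single $n$-simplex is a fixed combinatorial object, counts its interior $j$-faces once and for all, and then sums over faces of $\Delta$ to get $f_{j-1}(T(\Delta))=\sum_i c_{i,j} f_{i-1}(\Delta)$ with explicit $c_{i,j}$; the identity $T(F_\Delta)=F_{T(\Delta)}$ then reduces to a polynomial identity in one variable. That approach avoids the order-dependence issue entirely. Your Step~3 idea (deducing the second-kind statement from $T_n'=nU_{n-1}$) is reasonable once part~(1) is in hand.
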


	An initial consequence describes when the substitution $z \mapsto 2z + 2$ applied to the reciprocal polynomial of the gamma polynomial is equal to an $F$-polynomial. \\

	\begin{cor} \label{gamchebdiv1} ~\\
		Suppose that $(h_{ \frac{d}{2} }, 2 h_{  \frac{d}{2} - 1 }, \ldots, 2 h_1, 2 h_0 )$ gives the coefficients of $f_\Delta \left( \frac{t - 1}{2} \right)$ for some simplicial complex $\Delta$ of dimension $\frac{d}{2}$. Then, the linear modification $(2u + 2)^{ \frac{d}{2} } \gamma \left( \frac{1}{2u + 2} \right) $ of the reciprocal polynomial of the gamma polynomial is the $F$-polynomial of its Tchebyshev subdivision. Since $F_S(2x + 1) = f_S(x)$, this can also be used to produce the usual $f$-vector.   \\

	\end{cor}

	\begin{rem} 
		Although there has been previous work of Hetyei--Nevo \cite{HN} on (generalizations of) gamma vector and Tchebyshev subdivisions, the focus was on the Tchebyshev subdivisions as an input to the gamma vector-type construction rather than being part of the construction of the gamma vector itself. The type A to B change idea also shows up in Brenti--Welker's analysis of $h$-polynomials of barycentric subdivisions of Boolean cell complexes \cite{BW} and nonnegativity the top gamma vector implied by real-rootedness subdivisions via ``signed permutations'' of type $B$. \\
		
	\end{rem}
	
	\color{red}

	\color{black}
	
	\subsection{Compatibility of Chebyshev polynomials with linear transformations of polynomial spaces and $\mathfrak{sl}_2(\mathbb{C})$-representations} \label{addrec}

	In this subsection, we consider natural compatibility relations between Chebyshev polynomials and linear transformations $\mathbb{R}[x] \longrightarrow \mathbb{R}[x]$ and \emph{specializations} to $\mathfrak{sl}_2(\mathbb{C})$-representations. Note that the latter involve Chebyshev polynomials of the second kind. \\
	
	Given a palindromic/reciprocal polynomial, we started with viewing its gamma polynomial from the perspective of an inverted Chebyshev expansion. Recall from Section \ref{typeatob} that the \emph{linear transformation} $T : \mathbb{R}[x] \longrightarrow \mathbb{R}[x]$ defined by $x^n \mapsto T_n(x)$ is compatible with $f$-vector structures.  In this context, we study what it means to ``reverse engineer'' the $f$-vector structure from the linear transformation perspective on the shifted reciprocal polynomial $(2w + 2)^{ \frac{d}{2} } \gamma \left( \frac{1}{2w + 2} \right)$. Note that the linear transformation taking a gamma vector $\gamma(\Delta)$ to $(h_0(\Delta), h_1(\Delta), \ldots ,h_{ \frac{d}{2} }(\Delta))$ (which involves binomial coefficients) is known to be totally nonnegative by a Lindstr\"om-Gessel-Viennot argument (see p. 1377 of \cite{NPT}). This argument is similar to one in earlier work of Lindstr\"om (Lemma 1 on p. 87 of \cite{Lind}) and combinatorial properties specific to determinants involving binomial coefficients are further explored by Gessel--Viennot (e.g. see Section 1, Section 2, and Section 3 of \cite{GV}). \\

	Apart from simply considering the polynomials as is, we can also assign additional meaning via specializations. This involves compatibility relations between Chebyshev polynomials and linear transformations $\mathbb{R}[x] \longrightarrow \mathbb{R}[x]$ sending $x^n$ to the Chebyshev polynomials of the first or second kind.  More specifically, we look at conditions for ``reverse engineering'' $F$-polynomial structures of inverted Chebyshev expansions and specializing the variable (say $x$) to a generator of the ring of $SL_2(\mathbb{C})$-representations (Corollary \ref{gamchebdiv}). The latter yields connections to multiplication-compatible bijections with palindromic/reciprocal unimodal polynomials and Lefschetz-type maps (after we move to $\mathfrak{sl}_2(\mathbb{C})$-representations) work of Almkvist \cite{Alm1}. Since there is a simple relationship $T_m'(x) = m U_{m - 1}(x)$ expressing derivatives of Chebyshev polynomials of the first kind in terms of those of the second kind, we can relate this to our inverted Chebyshev expansions from earlier (Theorem \ref{gamchebinv}).  \\
	
	\pagebreak 
	
	The discussion above is summarized below:
	
	\color{black} 
	
	\begin{cor} \label{gamchebdiv} ~\\
		\vspace{-3mm}
		
		\begin{enumerate}

			\item Let $A$ be the change of basis matrix used to express a polynomial written in the usual monomial basis of the space of single variable polynomials of a given degree in terms of the basis of Chebyshev polynomials of the first kind (see p. 1 of \cite{BAMT}). Note that $T = A^{-1}$. Let $\widetilde{g}$ be the counterpart of $g$ from Proposition \ref{recipcheb} when terms of initial reciprocal/palindromic polynomial of degree $\le \frac{d}{2}$ have coefficients $(h_{ \frac{d}{2} }, h_{ \frac{d}{2} - 1 }, \ldots, h_1, h_0)$.  Let $v$ and $\widetilde{v}$ be the vector of coefficients of terms of degree $\le \frac{d}{2}$ in $g$ and $\widetilde{g}$. \\
			
			The polynomial $g$ (from Proposition \ref{recipcheb}) for the original reciprocal polynomial $h$ is the $F$-polynomial of the Tchebyshev subdivision $T(\Delta)$ of a simplicial complex $\Delta$ if and only if $A\widetilde{v}$ with the entries in reverse order is the $F$-polynomial of some simplicial complex $\Delta$. \\

			\item  After multiplying by 2, the derivative of the shifted reciprocal polynomial $(2u + 2)^{ \frac{d}{2} } \gamma \left( \frac{1}{2u + 2} \right) $ of the gamma polynomial corresponds to elements of $K_0(\Rep(\mathfrak{sl}(2, \mathbb{C})))$ with the variable $u$ standing for the ``usual'' generator. They come from $SL_2(\mathbb{C})$-representations with constructed out of $j$-dimensional irreducible representations $(1 \le j \le \frac{d}{2}$) $\Sym^j V$ ($V$ being a 2-dimensional vector space) with multiplicities given by $2j h_{ \frac{d}{2} - j }$ for $1 \le j \le \frac{d}{2}$ and $\frac{d}{2} h_{\frac{d}{2}}$ for index $\frac{d}{2}$. We can explicitly compute corresponding palindromic/reciprocal unimodal polynomials in a way that is compatible with multiplication via work of Almkvist \cite{Alm1}. The corresponding/induced $\mathfrak{sl}_2(\mathbb{C})$-representations also yield Lefschetz maps (e.g. see Theorem 14.2.2 on p. 239 -- 240 of \cite{Ara}, p. 5 -- 6 of \cite{Cat}). \\
			
			In addition, the connection to $K_0(\Rep(\mathfrak{sl}(2, \mathbb{C})))$ also connects this to natural correspondences between $\mathfrak{sl}_2(\mathbb{C})$-representations and pairs of unimodal polynomials \cite{Alm1} and categorification-related work \cite{QW}. \\

		\end{enumerate}

	\end{cor}
	
	\begin{rem} ~\\
		
		\begin{enumerate}

			\item 
			\begin{enumerate}

				\item Some further comments on the matrix $A$ and realizability in Corollary \ref{gamchebdiv1} and Part 1: \\
				
				The entries of $A$ (which can be computed using the methods in the proofs of Lemma 3.2 and Lemma 3.4 on p. 250 of \cite{DPS}) have some resemblance to cubical counterparts of the Dehn--Sommerville relations (p. 9 of \cite{Ad}) and other computations involving cubical $h$-vectors. Also, the nonnegativity can be used to construct $f$-vectors when $v$ has nonnegative entries. Then, the nonnegativity of $Av$ implies that scaling by a sufficiently large number (say $\binom{ \frac{d}{2} }{ \frac{d}{4}}$ ) gives the $f$-vector of a simplicial poset (Theorem 2.1 on p. 321 of \cite{Stfh}) to which we can precompose with the invertible matrix yielding subdivision we are considering to get the $f$-vector of a simplicial complex (and the $h$-vector after multiplying by the coordinate change matrix from $f$-vectors to $h$-vectors on the left). To explore what vectors are occur as $F$-vectors of simplicial complexes, one can see how the transformations $t \mapsto 2t + 1$ and $t \mapsto \frac{1 + t}{1 - t}$ interact with the inequalities involved in these realizability results. \\ 
				
				\item  A generalization of Part 1 can be stated as follows: \\
				Let $A$ be the change of basis matrix used to express a polynomial written in the usual monomial basis in terms of the basis of Chebyshev polynomials of the first kind. Suppose that $C$ is any invertible $\left( \frac{d}{2} + 1 \right) \times \left( \frac{d}{2} + 1 \right)$ matrix. Note that $T = A^{-1}$. Let $v$ and $\widetilde{v}$ be the vector of coefficients of terms of degree $\le \frac{d}{2}$ in $g$ and $\widetilde{g}$. Then, we have that \[  v = \left[ A^{-1} C A \right] \widetilde{v} \]  and \[  \widetilde{v} =  \left[ A^{-1} C A \right]  v. \] 
				
				The polynomial $g$ is the $F$-polynomial of the Tchebyshev subdivision $T(\Delta)$ of a simplicial complex $\Delta$ if and only if $CA\widetilde{v}$ is the $F$-polynomial of some simplicial complex $\Delta$. \\
				
				Note that the entries of $A$ are nonnegative. This implies that the entries of $Av$ and $A \widetilde{v}$ are nonnegative when $h_k \ge 0$ for all $0 \le k \le \frac{d}{2}$. \\ 
			\end{enumerate}

			\item  In Part 2, there is a multiplication-compatible bijection between palindromic/reciprocal unimodal polynomials of odd/even degree and $SL_2(\mathbb{C})$-representations that are direct sums of irreducible representation of odd/even dimensions by work of Almkvist \cite{Alm1}. This map is given by $f(t) \mapsto u^{-\deg f} f(\mu^2)$ with $\mu$ being a formal character such that the class of the $n$-dimensional irreducible representation $V_n$ is $\frac{\mu^n - \mu^{-n}}{\mu - \mu^{-1}}$ (p. 289 -- 290 of \cite{Alm1}). Here, $\mu$ is a formal parameter defined by $V_2 = \mu + \mu^{-1}$. Note that $U_{m - 1} \left( \frac{\mu + \mu^{-1}}{2} \right) = \frac{\mu^m - \mu^{-m}}{\mu - \mu^{-1}}$, where $U_{m - 1}$ denotes the $(m - 1)^{\text{st}}$ Chebyshev polynomial of the second kind. In this notation, we have that $V_{n + 1} = U_n(V_2/2)$ and the map $V_2 \mapsto X$ induces an isomorphism between the representation ring of $SL_2(\mathbb{C})$ and $\mathbb{Z}[X]$ (Theorem 2.2 on p. 285 of \cite{Alm1}). While we have mainly worked with Chebyshev polynomials of the \emph{first} kind, the relation $T_m'(x) = m U_{m - 1}(x)$ gives a direct connection to the setting of our inverted Chebyshev expansion  \[ (2u + 2)^{ \frac{d}{2} } \gamma \left( \frac{1}{2u + 2} \right) = h_{ \frac{d}{2} } + 2 \sum_{j = 1}^{ \frac{d}{2} } h_{ \frac{d}{2} - j } T_j(u) \] from Theorem \ref{gamchebinv}. The relation $T_m'(x) = m U_{m - 1}(x)$ is also the source of the derivative and multiplicities mentioned in the statement since \[ \frac{d}{du} (2u + 2)^{ \frac{d}{2} } \gamma \left( \frac{1}{2u + 2} \right) = h_{ \frac{d}{2} } + 2 \sum_{j = 1}^{ \frac{d}{2} } h_{ \frac{d}{2} - j } T_j'(u) =  h_{ \frac{d}{2} } + 2 \sum_{j = 1}^{ \frac{d}{2} } h_{ \frac{d}{2} - j } \cdot j U_{j - 1}(u). \] Suppose that we take $u = V_2/2$ (i.e. $V_2 = 2u$). Recall that $V_2 \mapsto X$. Then, we have \[ \frac{d}{du} (2u + 2)^{ \frac{d}{2} } \gamma \left( \frac{1}{2u + 2} \right) =  h_{ \frac{d}{2} } + 2 \sum_{j = 1}^{ \frac{d}{2} } h_{ \frac{d}{2} - j } \cdot j V_j. \]  The corresponding pair of palindromic/reciprocal unimodal polynomials $F$ and $G$ mapping to sums of terms with designated parities (see Theorem 3.2 on p. 289 of \cite{Alm1}) are of degree $\frac{d}{2}$ and $\frac{d}{2} - 1$. Given $0 \le p, q \le \frac{d}{4}$, the coefficient of $u^p$ in $F(u)$ is

			\[  \frac{d}{2} h_{ \frac{d}{2}  } + 2 \left( \frac{d}{2} - 2 \right) h_{ \frac{d}{2} - 2 } + \ldots + 2 \left( \frac{d}{2} - 2p \right)  h_{ \frac{d}{2} - 2p }.  \]

			\color{black}
			
			and the coefficient of $u^q$ in $G(u)$ is

			\[ 2 \left( \frac{d}{2} - 1 \right) h_{ \frac{d}{2} - 1 } + 2 \left( \frac{d}{2} - 3 \right) h_{ \frac{d}{2} - 3 } + \ldots + 2 \left( \frac{d}{2} - 2q - 1 \right) h_{ \frac{d}{2} - 2q - 1 }. \]
			
		\end{enumerate}
	\end{rem}

	\section{Intrinsic descent statistics and connections to quasisymmetric functions} \label{newchebconn}
	
	In this section, we will discuss new perspectives on the gamma polynomial expression and its connection to the Chebyshev polynomial. For example, a lift of the inverted Chebyshev expansion yielding the translated reciprocal polynomial of the gamma polynomial to a modification of the $cd$-index (the $ce$-index) induces a decomposition into (subdivision of) cross polytopes (Corollary \ref{gamchebdiv}). Each term of the lift enumerated using topological descent statistics. \\

	\subsection{Decompositions into (subdivisions) of cross polytopes, (topological) descents, and Chebyshev polynomials of the second kind}
	
	A combinatorial pattern that is persistent in many gamma positivity examples is a kind of permutation descent count among some set of objects that filters out double descents. For example, this includes Eulerian polynomials, Chow rings of matroids (recent work of Stump \cite{Stu} for general case and earlier work of Postnikov--Reiner--Williams \cite{PRW} and Ardila--Reiner--Williams \cite{ARW} for matroids associated to certain hyperplane arrangements built out of root systems), and $h$-vectors of nestohedra in work of Postnikov--Reiner--Williams \cite{PRW}. Work of Brenti--Welker \cite{BW} on $h$-polynomials of barycentric subdivisions (including the sign of the top gamma vector via real-rootedness) also involves permutation descents. Note that the descent statistics-related expressions for the gamma vector often make use of initial conditions on the input polynomial $h(t)$ (e.g. Theorem 1.1 on p. 210 of \cite{PRW}). \\

	Here, we take a different point of view and consider the structure of the gamma vector construction while only assuming that the $h$-polynomial is a reciprocal polynomial. In particular the inverted Chebyshev expansion shows that an additional ascent-descent statistic is baked into the structure of the gamma vector construction itself. We see this by lifting the Chebyshev polynomials to $ce$-indices (a variant of the $cd$-index -- see p. 495 -- 496 of \cite{He1}) of appropriate posets using earlier work of Hetyei \cite{He1} on Tchebyshev transforms of posets which precedes the work on Tchebyshev subdivisions mentioned earlier. Given a graded poset $P$ and its graded Tchebyshev transform $T(P)$, the order complex $\Delta(T(P) \setminus \{ (\widehat{-1}, \widehat{0}), (\widehat{1}, \widehat{2}) \} )$ triangulates the suspension of $\Delta(P \setminus \{ \widehat{0}, \widehat{1} \} )$ by Theorem 1.5 on p. 573 of \cite{He2}. \\

	\begin{cor} \label{gamtopdes} ~\\
		\vspace{-2mm}
		\begin{enumerate}
			\item Suppose that $\deg h = d$ for some even $d$ and $h(t) = t^d h(t^{-1})$. The $ce$-index of the shifted reciprocal polynomial  $(2u + 2)^d \gamma \left( \frac{1}{2u + 2} \right)$ of the gamma polynomial $\gamma(t)$ is lifted to a sum of $ce$-indices of posets $T_n$ such that the order complexes $\Delta(T_n \setminus \{ \widehat{0}, \widehat{1} \})$ triangulate the boundary of the $n$-dimensional cross polytope. In particular, they are computed by a topological descent statistics (weighted by the $h_i$) for edge labelings of maximal chains of posets. \\
			
			A description of topological descents is given on p. 496 -- 497 of \cite{He1}. More information on topological descents (topologically analogous to a descent in an $EL$ or $CL$ labeling as noted on p. 520 of \cite{BaH}) is given in \cite{BaH} and \cite{Her}. Descents in the context of $CL$ labelings and their connection to permutation descents are on p. 62 -- 64 and in Example 3.4.3 on p. 63 of \cite{Wa}. \\
			
			\item  For $ \left( \frac{2( w + 1)}{w} \right)^{ \frac{d}{2} } \gamma \left( \frac{w}{2(w + 1)} \right)$, a similar statement holds for the difference between specializations of $ce$-indices setting the other variable equal to 1 (counted by a  weighted sum of topological descent statistics) and fixed ``offset functions'' satisfying the same recursion. This comes from considering \emph{reciprocal polynomials} of Chebyshev polynomials of the first kind (see polynomials $R_k(x)$ in Part 2 of the proof). The monomials in $h(t)$ of a given degree play the role of the $h_i$ from Part 1.
		\end{enumerate}
	\end{cor}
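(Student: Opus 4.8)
The plan is to run the inverted Chebyshev expansion of Theorem \ref{gamchebinv} through Hetyei's theory of Tchebyshev transforms of \emph{posets} \cite{He1}, in parallel to the way Corollary \ref{gamchebdiv} ran it through his Tchebyshev triangulations of simplicial complexes. For Part 1, the starting point is Theorem \ref{gamchebinv}: the shifted reciprocal polynomial of $\gamma$ equals $g(u) = h_{d/2} + 2\sum_{j=1}^{d/2} h_{d/2-j}\, T_j(u/2)$, which in Hetyei's $F$-polynomial normalization is $g(2u) = h_{d/2}\, T_0(u) + \sum_{j=1}^{d/2} 2 h_{d/2-j}\, T_j(u)$, an $\mathbb{R}$-linear combination of Chebyshev polynomials of the first kind with weights $h_{d/2}, 2h_{d/2-1}, \ldots, 2h_0$. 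So it suffices to lift each individual $T_n(u)$ to a $ce$-index, to note that the lift and the forgetful ``flag $f$-vector'' specialization are both $\mathbb{R}$-linear, and then to assemble the weighted sum.

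For the lift I would invoke Hetyei's Tchebyshev posets \cite{He1}: iterating the graded Tchebyshev transform of posets produces a sequence of graded posets $T_n$ whose $ce$-indices $\omega_{ce}(T_n)$ satisfy the non-commutative recursion lifting $T_{n+1}(u) = 2u\,T_n(u) - T_{n-1}(u)$ and specialize to $T_n(u)$ under the map recording only the graded flag $f$-vector; and, by the iterate of Theorem 1.5 of \cite{He2}, the order complex $\Delta(T_n\setminus\{\widehat{0},\widehat{1}\})$ is an $n$-fold suspension of $S^0$, hence triangulates the boundary of the $n$-dimensional cross polytope. Hetyei's description of $\omega_{ce}(T_n)$ via \emph{topological descents} of edge labelings of maximal chains (p.\ 495--497 of \cite{He1}) computes each $\omega_{ce}(T_n)$ by such a statistic. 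Forming the $\mathbb{R}$-linear combination $h_{d/2}\,\omega_{ce}(T_0) + \sum_{j=1}^{d/2} 2 h_{d/2-j}\,\omega_{ce}(T_j)$ and using linearity of the specialization yields the promised $ce$-index lifting the shifted reciprocal polynomial of $\gamma$, visibly computed by a topological descent statistic weighted by the $h_i$ --- this proves Part 1.

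For Part 2, I would first carry out the substitution using Theorem \ref{gamchebinv}: from $\gamma(u) = u^{d/2} g(\tfrac1u - 2)$ and $u = \tfrac{w}{2(w+1)}$ one gets $\tfrac1u - 2 = \tfrac2w$, hence $\left(\tfrac{2(w+1)}{w}\right)^{d/2}\gamma\!\left(\tfrac{w}{2(w+1)}\right) = g\!\left(\tfrac2w\right) = h_{d/2} + 2\sum_{j=1}^{d/2} h_{d/2-j}\, T_j\!\left(\tfrac1w\right)$. Homogenizing by $w^{d/2}$ turns the $T_j(1/w)$ into the \emph{reciprocal Chebyshev polynomials} $R_j(w) \coloneq w^{j} T_j(1/w)$, which satisfy $R_0 = R_1 = 1$ and $R_{j+1}(w) = 2R_j(w) - w^2 R_{j-1}(w)$; the extra factor $w^2$ is exactly what keeps these from being a clean Tchebyshev transform, so the natural $ce$-index lift of $R_j$ is a topological descent statistic \emph{minus} a universal correction assembled from lower $R_k$. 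Repeating the Part 1 argument with the $R_j$ in place of the $T_j$ --- lifting each $R_j$ through Hetyei's posets, collecting the $w^2$-discrepancies into fixed (that is, $h$-independent) offset polynomials in the $R_k$, and taking the weighted sum, now with the coefficients $h_k$ of the monomials $t^k$ of $h(t)$ playing the role the $h_i$ played before --- gives Part 2.

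The step I expect to be the main obstacle is pinning down ``lift'' so that everything is compatible: one must check that Hetyei's $\omega_{ce}(T_n)$ really specializes to the Chebyshev polynomial $T_n$ itself (not a rescaled or shifted relative) under the flag $f$-vector map, that this map is $\mathbb{R}$-linear so that it commutes with the weighted sums above, and --- for Part 2 --- that the $w^2$-terms in the $R_j$-recursion genuinely assemble into offset functions that do not depend on $h$. The rest --- the substitutions, the recursions for $T_j$ and $R_j$, and the identification of the posets $T_n$ of \cite{He1} with the cross-polytope triangulations of \cite{He2} --- is bookkeeping, most of it already supplied by the cited results.
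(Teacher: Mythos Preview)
Your proposal is correct and follows essentially the same route as the paper. For Part 1, both you and the paper write $(u+2)^{d/2}\gamma\!\left(\tfrac{1}{u+2}\right) = g(u)$ via Theorem \ref{gamchebinv}, lift each Chebyshev polynomial $T_j$ to the $ce$-index $\Psi_{ce}(T_j)$ of Hetyei's Tchebyshev poset under the specialization $c \mapsto x,\ e \mapsto 1$ (this is Corollary 8.2 of \cite{He1}, which settles the compatibility worry you flag at the end), and then invoke Hetyei's topological-descent description of $\Psi_{ce}(T_n)$ (Corollary 6.3 and Theorem 7.1 of \cite{He1}). Linearity of the specialization lets you pass to the weighted sum, exactly as you say.

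For Part 2 the paper takes a slightly different tack than you anticipate. Rather than keeping the Part~1 specialization and treating the $w^2$ in the reciprocal-Chebyshev recursion as an obstruction, it switches to the \emph{other} natural specialization, sending $c \mapsto 1$ while retaining $e$ as the variable. Under this, Hetyei's non-commutative recursion $\Psi_{ce}(T_n) = 2c\,\Psi_{ce}(T_{n-1}) - e^2\,\Psi_{ce}(T_{n-2})$ (Proposition 8.1 of \cite{He1}) becomes precisely $A_n = 2A_{n-1} - x^2 A_{n-2}$, the recursion satisfied by the reciprocal Chebyshev polynomials $A_m(x) = x^m T_m(1/x)$; the $w^2$ is absorbed by $e^2$ and is not itself the source of any discrepancy. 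The paper's offset polynomials $R_k$ are then defined as the residual difference between the $A_m$ and this specialization, governed by the same linear recursion with its own initial data. Your overall architecture for Part 2 --- substitute, pass to reciprocal Chebyshev polynomials, lift through Hetyei's posets, isolate an $h$-independent correction --- is right; only the diagnosis of where the correction originates should be recast along these lines.
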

	
	\begin{rem} \label{permcrossconn} ~\\
		\vspace{-3mm}
		\begin{enumerate}
			
			\item As mentioned by Hetyei (p. 494 of \cite{He1}) and Ehrenborg--Readdy (p. 79 - 80 of \cite{ER0}), the relation between $ce$-indices of these Tchebyshev posets $T_n$ and topological descents is similar in spirit to work of Purtill \cite{Pur} studying $cd$-indices of the Boolean algebra in terms of Andr\'e permutations introduced by Foata and Sch\"utzenberger (permutations of sets without double descents and an additional property) and a signed version for $cd$-indices of cubical lattices. \\

			\item Keeping in mind the connection between $T_n$ and cross polytopes, we give some context on common connections between common conditions on $h$-polynomials where gamma vectors are studied (e.g. flag or balanced simplicial complexes). By a result of Caviglia--Constantinescu--Varbaro (Corollary 2.3 on p. 474 of \cite{CCV}), the $h$-vector of any flag Cohen--Macaulay simplicial complex is the $h$-vector of a balanced Cohen--Macaulay simplicial complex. Cross polytopes often come up in this context. For example, there is a sequence of cross-flips (a counterpart of bistellar flips respecting the balanced property) transforming a cross polytope into a given balanced $d$-sphere by work of Izmestiev--Klee--Novik (Theorem 3.10 on p. 95 of \cite{IKN}). Speaking of transformations, any two flag simplicial complexes are PL homeomorphic if and only if they can be connected by a sequence of edge subdivisions and their inverses with each step yielding a flag complex (Theorem 1.2 on p. 70 and 77 of \cite{LN}). It would be interesting if we can find a deeper connection to simplicial complexes arising from Tchebyshev subdivisions, which are given by repeated edge subdivisions. \\

		\end{enumerate}
	\end{rem}
	
	\begin{proof}
		\begin{enumerate}
			\item Recall that \[ (u + 2)^{ \frac{d}{2} } \gamma \left( \frac{1}{u + 2} \right) = g(u), \] where \[ g(u) \coloneq h_{ \frac{d}{2} } + 2 \sum_{ j = 1}^{ \frac{d}{2} } h_{ \frac{d}{2} - j } T_j \left( \frac{u}{2} \right). \]

			By Corollary 8.2 on p. 515 of \cite{He1}, the substitution $c \mapsto x, e \mapsto 1$ sends the $ce$-index of Tchebyshev posets $T_n$ (Tchebyshev transforms of ladder posets $L_n$ -- see p. 494, 500 of \cite{He1} and p. 938 of \cite{He3}) into the Chebyshev polynomial of the first kind $T_n(x)$. \\
			
			Before taking this specialization, the $ce$-indices of these posets $T_n$ are enumerated by a ``topological'' ascent-descent statistic in the context of edge labelings on maximal chains of posets while thinking about labels of consecutive edges on a given maximal chain (Corollary 6.3 and start of proof of Theorem 7.1 on p. 511 -- 512 of \cite{He1}). \\
			
			\item The argument is similar to Part 1 except that we use a different recursion for the polynomials involved and a different specialization of the $ce$-index of the posets involved. \\

			The idea is that a certain substitution yields linear combinations of \emph{reciprocal polynomials} $x^m T_m(x^{-1})$ of the Chebyshev polynomials $T_m(x)$, whose defining recursive relation is a specialization of that of $ce$-indices of Tchebyshev posets. Considering initial conditions, it turns out that multiplying by 2 moves the $ce$-indices in question to the actual reciprocal polynomials of the Chebyshev polynomials. \\
			
			Substituting in $u = \frac{w}{2(w + 1)}$, we have that \[ \frac{1}{u} - 2 = \frac{2(w + 1)}{w} - 2 = 2 + \frac{2}{w} - 2 = \frac{2}{w}. \]
			
			This means that

			\begin{align*}
				\gamma \left( \frac{w}{2(w + 1)} \right) &= \left( \frac{w}{2(w + 1)} \right)^{ \frac{d}{2} } g \left( \frac{2}{w} \right) \\
				&= \left( \frac{1}{2(w + 1)} \right)^{ \frac{d}{2} } w^{ \frac{d}{2} } g \left( \frac{2}{w} \right).
			\end{align*}
			
			\color{black}
			Substituting $u = \frac{2}{w}$ into \[  g(u) \coloneq h_{ \frac{d}{2} } + 2 \sum_{ j = 1}^{ \frac{d}{2} } h_{ \frac{d}{2} - j } T_j \left( \frac{u}{2} \right), \] we have that

			\begin{align*}
				g \left( \frac{2}{w} \right) &= h_{ \frac{d}{2} } + 2 \sum_{ j = 1}^{ \frac{d}{2} } h_{ \frac{d}{2} - j } T_j \left( \frac{1}{w} \right) \\
				\Longrightarrow w^{ \frac{d}{2} } g \left( \frac{2}{w} \right) &= h_{ \frac{d}{2} } w^{ \frac{d}{2} } + 2 \sum_{ j = 1}^{ \frac{d}{2} } w^{ \frac{d}{2} } h_{ \frac{d}{2} - j } T_j \left( \frac{1}{w} \right) \\
				&= h_{ \frac{d}{2} } w^{ \frac{d}{2} } + 2 \sum_{ j = 1}^{ \frac{d}{2} } ( h_{ \frac{d}{2} - j } w^{ \frac{d}{2} - j }) \cdot w^j T_j \left( \frac{1}{w} \right).
			\end{align*}
			
			\color{black}
			Thinking about the second term in products contained in the sum in terms of $v = \frac{1}{w}$, they give reciprocal polynomials $v^k T_k(v^{-1})$ of the Chebyshev polynomials of the first kind $T_k(v)$. \\
			
			Recall that \[ T_n(x) = 2x T_{n - 1}(x) - T_{n - 2}(x). \]
			
			Substituting $\frac{1}{x}$ in place of $x$ and multiplying by $x^n$, we have that

			\begin{align*}
				T_n \left( \frac{1}{x} \right) &= \frac{2}{x} T_{n - 1}  \left( \frac{1}{x} \right) - T_{n - 2} \left( \frac{1}{x} \right) \\
				\Longrightarrow x^n T_n \left( \frac{1}{x} \right) &= 2 x^{n - 1} T_{n - 1} \left( \frac{1}{x} \right) - x^2 \cdot x^{n  - 2} T_{n - 2} \left( \frac{1}{x} \right).
			\end{align*}

			Setting $A_m(x) \coloneq x^m T_m(x^{-1})$, this means that \[ A_n(x) = 2 A_{n - 1}(x) - x^2 A_{n - 2}(x). \]
			
			We note that $A_0(x) = 1$ and $A_1(x) = 1$. \\
			
			It turns out these also have a topological descent interpretation using the same posets $T_n$ as in Part 1. This follows from the recursion for the $ce$-index of the posets $T_n$ given by \[ \Psi_{ce}(T_n) = 2c \Psi_{ce}(T_{n - 1}) - e^2 \Psi_{ce}(T_{n - 2}) \] from Proposition 8.1 on p. 514 of \cite{He1}. \\
			
			In addition, we have that $\Psi_{ce}(T_0) = 1$ and $\Psi_{ce}(T_1) = c$ (see Table 1 on p. 514 of \cite{He1} and note that $d$ is unused), which means that keeping $e$ as is and setting $c = 1$ gives the same recurrence as that of $A_m(x) = x^m T_m(x^{-1})$. \\
			
			The difference between $A_m(x)$ and this specialization is then a polynomial $R_k(x)$ satisfying the recursion \[ R_k(x) = R_{k - 1}(x) - x^2 R_{k - 2}(x) \] such that $R_0(x) = 0$ and $R_1(x) = \frac{1}{2}$. This gives the fixed function in question. \\
			
			The remaining part follows from the same results cited as in Part 1.
		\end{enumerate}
	\end{proof}

	We can naturally extend the results above to those on Chebyshev polynomials of the second kind after taking derivatives. This results in connections with lifts to algebraic invariants. \\

	\begin{rem} \textbf{(Counterparts for Chebyshev polynomials of the second kind and quasisymmetric functions)} \label{secondchebver} \\
		Considering derivatives of (modifications of) the gamma vector gives counterparts of the results above for Chebyshev polynomials of the second kind since $T_n'(x) = n U_{n - 1}(x)$ (e.g. see p. 571 of \cite{He2}), where $U_k(x)$ denotes the $k^{\text{th}}$ Chebyshev polynomial of the second kind. Applying work of Ehrenborg--Readdy \cite{ER} (Theorem 10.3 on p. 231 of and Theorem 11.1 on p. 234 of \cite{ER}) gives connections to Hopf algebras and quasisymmetric functions. Note that $U(F_\Delta)(x) = \frac{1}{2} F_{U(\Delta)}(x)$ by a result of Hetyei (Proposition 4.4 on p. 580 of \cite{He2}). \\
	\end{rem}

\end{document}